\documentclass[11pt, oneside, usenames, dvipsnames, svgnames, table, final]{amsart}

\usepackage[T1]{fontenc}
\usepackage[utf8]{inputenc}

\usepackage[foot]{amsaddr}

\newcommand{\iEnd}{\underline{\mathrm{End}}}
\newcommand{\transf}{\operatorname{transf}}
\newcommand{\orth}{\operatorname{orth}}
\newcommand{\symp}{\operatorname{symp}}



\usepackage{fourier}

\frenchspacing

\input{header}

\author{Uriya First${}^{\lowercase{a}}$ \and Ben Williams${}^{\lowercase{b}}$}
\address{${}^a$University of Haifa, 199 Abba Khoushy Avenue, Haifa, Israel\\ ${}^b$The University of British Columbia, 1984 Mathematics Rd, Vancouver BC V6T 1Z2, Canada}
\email{ufirst@univ.haifa.ac.il}
\email{ tbjw@math.ubc.ca}

\usepackage[backref=true,style=alphabetic,citestyle=alphabetic,url=false,backend=biber]{biblatex}
\addbibresource{Article.bib}
\addbibresource{BenW_Standard_BibTeX5.bib}

\newcommand{\benw}[2][]{\ifdraft{\todo[linecolor=Green,backgroundcolor=Green!25,bordercolor=Green,#1]{#2---Ben W.}}{}}
\newcommand{\uriyaf}[2][]{\ifdraft{\todo[linecolor=Red,backgroundcolor=Red!25,bordercolor=Red,#1]{#2---Uriya F.}}{}}

\begin{document}   

\title{Counterexamples in Involutions of Azumaya Algebras}

\thanks{
The first author is supported by an ISF grant no.\ 721/24. 
The second author acknowledges the support of the Natural Sciences and Engineering Research Council of Canada (NSERC), RGPIN-2021-02603.}

\begin{abstract}
  Suppose $A$ is an Azumaya algebra over a ring $R$ and $\sigma$ is an involution of $A$
  extending an order-$2$ automorphism $\lambda:R\to R$. We say $\sigma$ is
  \emph{extraordinary} if there does not exist a  Brauer-trivial Azumaya algebra $\End_R(P)$ over $R$ carrying an involution $\tau$
  so that $(A, \sigma)$ and $(\End_R(P), \tau)$ become isomorphic over some
  faithfully flat extension of the fixed ring of $\lambda:R\to R$. 
  We give, for the first time, an example of such an algebra
  and involution. 
  We do this by finding suitable cohomological obstructions and showing they do not always vanish.

  We also give an example  of a commutative ring  $R$ with involution  $\lambda$ so that the scheme-theoretic  fixed locus $Z$ of
  $\lambda:\Spec R\to\Spec R$ is disconnected, but such that every Azumaya algebra  over $R$ with involution
  extending $\lambda$ is  either orthogonal at every point  of $Z$,
  or symplectic at every point of $Z$.
  No examples of this kind were previously known.
%
\end{abstract}
\maketitle

\section{Introduction}
\label{sec:intro}

Suppose $R$ is a commutative ring in which $2$ is a unit, and $A$ is an Azumaya algebra of degree $n$ over $R$,
i.e., a separable $R$-algebra with centre $Z(A) = R\cdot 1_A$. Suppose $\sigma : A \to A$ is an involution: an order-$2$ additive
self-map that reverses the order of multiplication. Restricting $\sigma$ to the centre, we obtain an order-$2$
automorphism $\lambda :R \to R$. We also say that $\sigma$ is a \emph{$\lambda$-involution}. 

An isomorphism of algebras with involution $\phi: (A, \sigma) \to (B, \tau)$ is an isomorphism of algebras that is also
compatible with the involution: $\phi(\sigma(a)) = \tau(\phi(a))$ for all $a \in A$.

Write $R^\lambda$ for the  fixed subring of $\lambda:R\to R$. 
In this introduction, we work over the fpqc site of $\Spec (R^{\lambda})$ (as defined in \cite[\href{https://stacks.math.columbia.edu/tag/03NV}{Tag 03NV}]{SP23}, see also Section \ref{sec:flat-topologies} below).
By saying that two objects defined over $R^{\lambda}$
are \emph{locally isomorphic}, we mean that they become isomorphic after base-change along some faithfully flat ring extension $R^\lambda\to S$: when there is a risk of confusion, we may clarify by saying \emph{fpqc-locally isomorphic over $R^\lambda$}.
For example,   two $R$-algebras with $\lambda$-involution
$(A, \sigma)$, $(B, \tau)$ are  locally isomorphic  if  there is some faithfully flat homomorphism of rings $R^\lambda
\to S$ for which the extensions $(A_S, \sigma_S)$ and $(B_S, \sigma_S)$ are isomorphic as $R_S$-algebras with involution. Here,
$A_S$ denotes  $S \tensor_{R^\lambda} A$ and $\sigma_S$ denotes $\id_S\otimes \sigma$.

\begin{remark}
  Later in the paper, we chiefly work over the \'etale site of $\Spec(R^\lambda)$, because this is
  the site studied in the bulk of \cite{First2020}---a source on which we rely extensively. When
  $2$ is invertible, \'etale- and fpqc-local isomorphism for Azumaya algebras with
  $\lambda$-involution coincide by virtue of our Proposition \ref{pr:etaleLocalIsoIsFPQC}.%
\end{remark}

Upon forgetting the involution, an Azumaya algebra $A$ of degree $n$ over $R$ is fpqc-locally isomorphic over $R$ to $\Mat_{n \times n}(R)$.  
This is also true
fpqc-locally 
over   $R^\lambda$ \cite[Thms.~4.28, 4.35]{First2020}. 
One might therefore hope that every Azumaya algebra with $\lambda$-involution $(A, \sigma)$ should be  locally isomorphic over $R^\lambda$ to $(\Mat_{n \times n}(R), \tau)$ for some choice of $\lambda$-involution
$\tau$. Strong evidence that this is not the case is given by \cite[Example 7.7]{First2020}, where it is shown that some $(A, \sigma)$ exists that is not locally isomorphic to $(\Mat_{n \times n}(R), \tau)$ if $\tau$ is constrained to be given by a composite of transposition, application of $\lambda$ and conjugation by some $x \in \GL_n(R)$---that is, there are examples where the involution $\tau$, if it existed, would have to be esoteric. Involutions
$\sigma$ for which  $\tau:\Mat_{n \times n}(R)\to \Mat_{n \times n}(R)$ exists and has the said special decomposition 
are called \emph{ordinary};
this and related  definitions to follow are special cases of
\cite[Def.~6.16]{First2020}. 

One might then hope that all Azumaya algebras with $\lambda$-involution $(A, \sigma)$ are locally isomorphic 
over $R^\lambda$ 
to some Brauer-trivial Azumaya algebra with a $\lambda$-involution---a substantially weaker statement than that in the paragraph above. Specifically, we say $(A, \sigma)$, or just $\sigma$, is \emph{semiordinary} if there exists a finitely generated faithful projective $R$-module $P$ and an involution $\tau: \End_R(P) \to \End_R(P)$ so that $(\End_R(P), \tau)$ is locally isomorphic to $(A, \sigma)$.  

We call the involution $\sigma$ \emph{extraordinary} if it is not semiordinary. 
In our first main result, Theorem \ref{th:AisExtraordinary}, we show that there exists an Azumaya algebra over a
ring $R$ with an extraordinary involution, dashing the hopes we articulated above, and
answering \cite[Problem~6.25]{First2020}. Philosophically, existence of an
extraordinary involution shows that the classification of Azumaya algebras with involution up to local isomorphism
cannot be reduced to questions about projective modules and hermitian forms on them, much less to classification of
involutions on matrix algebras.

\benw{added material}\uriyaf{Edited this remark.}
\begin{remark}
The discussion so far concerns Azumaya algebras \textit{per se} rather than their Brauer classes, but in fact if $(A,\sigma), (A', \sigma')$ are two Azumaya algebras with $\lambda$-involution over $R$,
and if $A$ and $A'$ are Brauer-equivalent, then $ \sigma $ is extraordinary (resp.\ semiordinary) if and only if $ \sigma' $ is:
see Proposition \ref{pr:supportExtraordinaryIsBrauerInv}. 
On the other hand, it may happen that $\sigma$ is ordinary and $\sigma'$ is not;
see Remark~\ref{rm:ordinaryBreq}(ii).
\end{remark}
\benw{end of added material}

\uriyaf{being of change. the original paragraph is commented out.}  In order to present our next
main result, we recall the concept of coarse type of  $(A,\sigma)$ from \cite[\S 5.2]{First2020}.
Let $Z$ denote the scheme-theoretically fixed locus of $\lambda^*:\Spec R\to \Spec R$; see
Section~\ref{sec:CT} for its definition. Then $Z$ is a closed subscheme of $\Spec R$, and
$\lambda^*$ restricts to the identity on $Z$.  Moreover, writing $A_Z$ for the restriction of $A$ to
$Z$, the involution $\sigma$ restricts to an involution of the first kind $\sigma_Z:A_Z\to A_Z$. The
scheme $Z$ is therefore the disjoint union of two closed subsets, $Z=Z_{\orth}\cup Z_{\symp}$, such
that $\sigma$ is orthogonal when restricted to $Z_{\orth}$ and symplectic when restricted to
$Z_{\symp}$. This information can be neatly packed into a continuous section $c_\sigma:Z\to\{\pm1\}$
taking the value $1$ on $Z_{\orth}$ and $-1$ on $Z_{\symp}$; we call $c_\sigma$ the \emph{coarse
  type} of $\sigma$.

Two Azumaya algebras with $\lambda$-involution of the same degree over $R$  have the same coarse type if and only if they are locally isomorphic (\cite[Prop.~5.32]{First2020}).
Moreover, the coarse type of an involution determines whether the involution is ordinary, semiordinary, or
extraordinary (\cite[Thm.~6.19]{First2020}).

One might hope that every locally constant function $c: Z \to \{+1, -1\}$ is the coarse type
of some $\lambda$-involution, and thus the class of Azumaya algebras with $\lambda$-involution
up to local isomorphism and passing to matrices\uriyaf{This was missing in the original phrasing.}\benw{needs work}
is in bijection with the locally constant functions $c: Z \to \{+1, -1\}$.
However, in  Theorem~\ref{thm:unrealizableCoarseType},
we
show that this is not the case by giving examples of rings with
involution $(R,\lambda)$  for which $Z$
is disconnected, but for which any Azumaya $R$-algebra 
with $\lambda$-involution is of constant coarse type.
This answers \cite[Problem 6.24]{First2020}.
\uriyaf{end of change.}

In all of our examples, the ring $R$ is smooth over an algebraically closed
field $k$, which may be chosen arbitrarily
subject only to the requirement that the characteristic is different from $2$.
We obtain the examples by first 
observing that their desired properties follow from cohomological assumptions
on $\Spec R$, 
and then constructing a ring $R$ for which these assumptions hold.

\uriyaf[inline]{Added the following remark. 
I think it may be  helpful to some potential readers.}

\begin{remark}
 The phenomena that we exhibit in this paper, i.e.,
 the existence of extraordinary involutions and  coarse types not coming from an involution,
 can occur only when the fixed locus of $\lambda^*:\Spec R\to \Spec R$
 is nontrivial, i.e., when $Z\neq \emptyset, \Spec R$.
 Indeed,  $Z$ is trivial precisely when $\lambda=\id_R$
 or $R/R^\lambda$ is quadratic \'etale, and in these situations, it is known that all $\lambda$-involutions are ordinary and all coarse types arise from $\lambda$-involutions;
 see \cite[III.\S8.1]{Knus1991} and \cite[Corollaries~6.21, 6.22]{First2020}.
 
 In addition, even when $Z\neq \emptyset, \Spec R$, it shown in \cite[Corollary~6.23]{First2020} that if $R^\lambda$ regular, then all $\lambda$-involutions are semiordinary, and it is still the case that all the coarse types arise from
        $\lambda$-involutions.  Thus, examples such as those constructed in this work can occur only when $R^\lambda$ is singular.
\end{remark}

\subsection*{Acknowledgement}

We thank an anonymous referee for many useful suggestions that have improved
and streamlined the exposition, and strengthened some statements.

\section{Involutions of central simple algebras over fields} \label{sec:fields}

Suppose $k$ is a field of characteristic different from $2$. Let $\lambda: k \to k$ be an order-$2$
automorphism, and $A$ be a degree-$n$ central simple algebra over $k$. Suppose
$\sigma : A \to A^\op$ is an involution that restricts to give $\lambda$ on $Z(A) = k \cdot
1_A$. The theory of such involutions is very well studied (see \cite{Knus1998a} for instance), and
they are all \emph{ordinary} in the terminology introduced in the introduction\uriyaf{Changed this,
  because now ordinary involutions are explained in the introduction}. We include this short section
in order to fix terminology and to emphasize the contrast between fields and general rings.

Either $\lambda=\id_k$ or $k$ is a quadratic Galois extension of the $\lambda$-fixed subfield $k_0$.

If $\lambda=\id_k$, then the involution $\sigma$ is said to be \emph{of the first kind.} Working locally in the fpqc topology on $\Spec k$ then amounts to extending scalars to an algebraic closure $\bar k$. Here $A_{\bar k}\iso \End_{\bar k}(V)$ where $V$ is a
$\bar k$-vector-space $V$. The involution $\sigma_{\bar k}$ is induced by
either a symmetric or skew-symmetric nondegenerate bilinear  form $\langle\cdot,\cdot\rangle$ on $V$ via
adjunction:
\[ \langle m(v), w \rangle = \langle v , \sigma_{\bar k}(m)(w) \rangle \qquad \forall m \in \End_{\bar k}(V), \quad
  \forall v,w \in V. \]
In the
symmetric case, $\sigma$ is said to be \emph{orthogonal} and in the skew-symmetric, it is said to be \emph{symplectic}. The
symplectic case can occur only if $n$ is even. The cases can be distinguished easily in practice, since the fixed
subspace of $\sigma$ has $k$-dimension $(n^2+n)/2$ if $\sigma$ is orthogonal and $(n^2-n)/2$ if $\sigma$ is symplectic.

If $\lambda$ is nontrivial, then $\sigma$ is said to be \emph{of the second kind}. Working locally in the fpqc topology
over $\Spec k_0$ entails passing to an algebraic closure $\bar k=\bar k_0$. Over this field, the quadratic extension
$k/k_0$ splits to give a split \'etale algebra $ \bar k \times \bar k/\bar k$, and $(A_{\bar k}, \sigma_{\bar k})$ is
isomorphic to $\Mat_{n
  \times n}(\bar k) \times \Mat_{n \times n}(\bar k)^\op$  with an involution that exchanges the two factors. Thus, there is only one   $\lambda$-involution up to fpqc-local isomorphism over $k_0$ in this case.

\section{Preliminaries}
\label{sec:conventions}
\benw[inline]{This section has been added---it was originally added after Section \ref{sec:gen}}
\uriyaf[inline]{Maybe merge Sections 2--4 into one "Conventions and Preliminaries" section?}
\benw[inline]{I think it will be good to have a short-ish ``conventions'' section. A lot of what the referee complained about was that it was hard to know what our conventions were.}
\uriyaf[inline]{I changed the organization of this section a little.}

We introduce some conventions and general notation that will be used throughout the rest of the paper.

\subsection{Conventions}

The term ``ring'' will mean ``commutative unital ring''. We write $C_2$ for the cyclic group of order $2$. The notation $I_n$ is used for the $n \times n$ identity matrix.

The letters $X$ and $Y$, and various modifications such as $\tilde X$, $Y_i$, will be used to denote schemes.
The residue field of $x\in X$ is denoted $\kappa(x)$. Given a section $f\in{\sh O}_X(U)$, where $U$ is an open subset of $X$ and $x\in U$, we write $f(x)$ for the image of $f$ in $\kappa(x)$.

By default, we use the small \'etale sites of the schemes under consideration. Therefore, the term ``sheaf'' means ``\'etale sheaf'' and ``cohomology'' means ``\'etale cohomology''. The notation  $\Hoh^n(X; \sh F)$, without further ornamentation, denotes the $n$-th \'etale cohomology group of $X$ with coefficients in the sheaf $\sh F$.

\uriyaf{Added these notations.}
Given a scheme $X$, its Picard and Brauer groups are denoted 
$\Pic(X)$ and $\Br(X)$, respectively.
Recall that the Brauer group consists of Brauer classes of Azumaya algebras over $X$,
and it is naturally a subgroup of $\Hoh^2(X,\sh O^\times_X)$ --- the cohomological Brauer group.

\uriyaf[inline]{I embedded the discussion about locality in the presence of an involution
into the subsection about good quotients, because this seems to be the only context
where we  discuss it.}

\subsection{Good quotients}
\label{sec:good-quotients}

Suppose $X$ is a scheme with an involution $\lambda$, or equivalently, an action by $C_2$. 
We must take the quotient of the scheme $X$
by the action of $C_2$, and for that purpose, we
use the notion of a \emph{good quotient} $q:X \to Y$ ultimately from \cite[Def.~1.5]{Seshadri1972} (see also the modern reference \cite[\href{https://stacks.math.columbia.edu/tag/04AB}{Tag 04AB}]{SP23}).

A good quotient of $X$ by $C_2$ is a morphism $q:X \to Y$ having the properties that $q$ is $C_2$-equivariant where $Y$ carries a trivial action, $q$ is affine and surjective, and that $\sh O_Y \to q_*(\sh O_X)$ is an isomorphism onto the $\lambda$-fixed subsheaf of $q_*(\sh O_X)$. Good quotients are categorical quotients, and therefore unique up to unique isomorphism. We will write $q: X \to X/C_2$ to denote a good quotient of $X$ by $C_2$ when it exists.

A good quotient of $X/C_2$ exists if and only if $X$ can be written as a union of $C_2$-invariant affine open subschemes, i.e., if $C_2$ acts \textit{admissibly} on $X$ in the terminology of \cite[Exp.~V, D\'ef.~1.7 \& Prop.~1.8]{Grothendieck1971}. We refer to \cite[Ex.~4.20]{First2020} for the proof. 
For example, if $X = \Spec(R)$ is affine and the $C_2$-action
arises from an involution $\lambda:R\to R$, then $\Spec(R^\lambda)$ is the good quotient $X/C_2$.

\subsection{Local Properties Involving an Involution}

Let $X$ be a scheme with an involution $\lambda$.
A $\lambda$-involution on a sheaf $\sh F$ on $X$ is a morphism of sheaves $\mu : \sh F \to \lambda_* \sh F$ with the property that $\lambda_*(\mu) \circ \mu = \id_{\sh F}$.

Suppose now that $X$ admits a good quotient $q:X\to X/C_2$ with respect to $\lambda$.
We will say that a sheaf with $\lambda$-involution $(\sh F, \mu)$ (over $X$) has some property \emph{locally} if the pair $(q_*(\sh F), q_*(\mu))$ on $X/C_2$ has this property
relative to the \'etale site of $X/C_2$.
When there is a risk of confusion regarding the base or its site, we may write ``\'etale-locally over $X/C_2$''.
Similarly, if $f: W \to X$ is a scheme over $X$ equipped with an involution $\mu: W \to W$ over $\lambda: X \to X$, then we say that $(W,\mu)$ has a property ``locally'' if it has such a property (\'etale-)locally when $\mu: W \to W$ is viewed as a diagram over $X/C_2$ by means of $q \circ f$.

\section{Involutions of Azumaya algebras} \label{sec:gen}

Let $X$ be a scheme. If $n$ is a natural number, then an Azumaya algebra of degree $n$ on $X$ is a sheaf $\sh A$ of $\sh O_X$-algebras that is \benw{changed, ``\'etale'' removed, to avoid negative inference.}locally isomorphic to the sheaf $\Mat_{n \times n}(\sh O_X)$.

One way of producing degree-$n$ Azumaya algebras on $X$ is to take a rank-$n$ locally free sheaf $\sh V$ and produce the endomorphism sheaf $\iEnd_{\sh O_X}(\sh V)$. Such algebras are called \emph{Brauer trivial}, and represent the trivial element of the Brauer group of $X$, denoted $\Br(X)$. They include the case of the trivial algebras $\Mat_{n \times n}(\sh O_X) = \iEnd(\sh O_X^n)$.

Let $\lambda : X \to X$ be an order-$2$ map.\uriyaf{Removed a sentence here as it is now superfluous.}
A \emph{$\lambda$-involution} of an Azumaya algebra $\sh A$ is an isomorphism of algebras $\tau : \sh A \to \lambda_* \sh A^{\op}$ such that $\lambda_*(\tau) \circ \tau = \id$. The notation $\sh A^\op$ denotes the algebra having the underlying module structure of $\sh A$, but where multiplication is reversed. This is the definition of involution in \cite[\S1]{Gille2009}.

Under the hypothesis that $\lambda^2 = \id$, the pushforward functor $\lambda_*$ of sheaves (of sets) and its left adjoint $\lambda^{-1}$ are naturally isomorphic.

\begin{remark} \label{rem:bundleTheory}
 We may move between locally free sheaves of finite rank and vector bundles. A
 degree-$n$ Azumaya algebra $\sh A$ over $X$ corresponds to a bundle $p : A \to X$ of algebras, locally isomorphic to
 $\Mat_{n \times n}(\sh O_X)$.

 In bundle terms, a $\lambda$-involution of algebras consists of a morphism $\tau: A \to A$ of vector bundles so that
\begin{itemize}
    \item The diagram
    \[ 
\begin{tikzcd}
    A \dar{p} \rar{\tau} & A \dar{p} \\ X \rar{\lambda} & X
\end{tikzcd}
\]
commutes;
\item $\tau^2 = \id$;
\item $\tau$ reverses the multiplication on the bundle of algebras $A$.
\end{itemize}
\end{remark}
\benw{text removed here, moved to the section on conventions}

\benw{some rewriting of the paragraph below}
An Azumaya algebra with $\lambda$-involution $(\sh A, \sigma)$ is said to be \emph{semiordinary} if there exists a Brauer-trivial Azumaya algebra $\iEnd_{\sh O_X}(\sh V)$ on $X$ and a $\lambda$-involution $\tau$ on it such that $(\sh A, \sigma)$ and $(\iEnd_{\sh O_X}(\sh V), \tau)$ are locally isomorphic (i.e., locally isomorphic on $X/C_2$ after application of $q_*$). If it is not semiordinary, then $(\sh A, \sigma)$ is \emph{extraordinary}. In an abuse of notation, we may simply say that $\sigma$ is semiordinary or extraordinary. As in the introduction, this definition is a special case of \cite[Def.~6.16]{First2020}.

\uriyaf[inline]{Move the following from the Section about ordinary involutions. I figured
it would be useful to introduce ordinary involutions here. The example is also useful for a few
other things.}

\begin{example} \label{ex:standardOrdinary}
 There is a canonical isomorphism of sheaves of rings $\sh O_X \to \lambda_* \sh O_X$, which takes $f \in \sh O_X(U)$, i.e., a section on $U$, to the composite section $f \circ \lambda \in \sh O_X(\lambda^{-1}(U))$. We will write
\[ \lambda : \sh O_X \to \lambda_* \sh O_X \]
for this isomorphism. We will also write $\lambda$ for the induced isomorphism $\Mat_{n \times n}(\sh O_X) \to \Mat_{n \times n}(\lambda_* \sh O_X)$.

    Now consider the trivial Azumaya algebra $\Mat_{n \times n}(\sh O_X)$. Let $m \in \Hoh^0(X; \GL_n(\sh O_X))$ be a global section of its group of units, and suppose further that $m$ has the property that 
    \[\lambda(m^T) = f m, \quad \text{for some $f \in \Hoh^0(X; \sh O_X^\times)$},\]
    where $m^T$ is the transpose of $m$.
    We define an involution $\tau_m$ of $\Mat_{n \times n}(\sh O_X)$ by 
    \[ \tau_m (n) = m^{-1} \lambda(n^T) m \]
    on sections. 
    One readily verifies that this is indeed a $\lambda$-involution
    of $\Mat_{n \times n}(\lambda_* \sh O_X)$.
\end{example}

A degree-$n$ Azumaya algebra with $\lambda$-involution $(\sh A, \sigma)$ is said to be \emph{ordinary} if there is
$m \in \Hoh^0(X; \GL_n(\sh O_X))$  such that  $(\sh A, \sigma)$ is
locally isomorphic  to
$(\Mat_{n \times n}(\sh O_X), \tau_m)$  defined in Example
\ref{ex:standardOrdinary}. This is the definition given in \cite[Def.~6.16]{First2020}. As with semiordinary and extraordinary algebras with involution, we may simply refer to the involution $\sigma$ as ordinary.

\uriyaf{Added this paragraph}
Ordinary involutions are semiordinary, but the converse is false in general
\cite[Example~7.7]{First2020}. 

\section{The coarse type} \label{sec:CT}
\benw{Rewrite starts here}
Suppose $X$ is a scheme endowed with a $C_2$-action, the nontrivial element acting as $\lambda:X\to X$. We write $Z$ for the equalizer of the two maps $\id_X, \lambda: X \rightrightarrows X$ and call it the\emph{ramification locus} or \emph{fixed subscheme} of $\lambda$.
Suppose moreover that a good quotient $q: X\to X/C_2$ exists. 
\benw{Rewrite ends here.}

\textit{A priori}, $Z$ is a locally closed subscheme of $X$. A point
$x \in X$ lies in $Z$ if and only if 
$\lambda(x)=x$ and $\lambda$ induces the identity map on $\kappa(x)$.
This means that our definition of $Z$, as a subset of $X$, agrees with the definition of \cite[\S
4.5]{First2020}
(see \cite[Prop.~4.45(d)]{First2020}). In particular $Z$ is closed as a subset of $X$. Note that in \cite[\S 4.5]{First2020}, $Z$ was endowed with the reduced induced subscheme structure, whereas here we use a functorially defined subscheme structure. 
In practice what
is used below is the underlying subset of $Z$, and so the scheme structure is immaterial to our arguments.

\uriyaf{Rephrase from here. The reason was that over a general ringed space, the coarse type does not
live in $\Hoh^0(Z,\mu_2)$.}
Suppose $2 \in \Hoh^0(X, \sh O_X)$ is invertible, and let $(\sh A,\sigma)$ be a degree-$n$ Azumaya algebra with $\lambda$-involution over $X$.
In \cite[\S5.2]{First2020}, we associated to $(\sh A,\sigma)$ an invariant called
the \emph{coarse type}. This was done in the generality  
of Azumaya algebras over general ringed sites, using a descent construction. In our situation there is a simpler description of the coarse type, given in \cite[\S 5.4]{First2020}, which we now explain.


Let $\mu_2$ denote the sheaf of square-roots of $1$ in ${\sh O}_Z$.\uriyaf{End of change.}
For a point
$z \in Z$, pulling $(\sh A, \sigma)$ back along $\Spec \kappa(z) \to X$ yields an involution of the first kind $\sigma_{\kappa(z)}$ of
the central simple $k(z)$-algebra $\sh A_{\kappa(z)}$. Define
\[ c_{\sigma}(z) = \begin{cases} +1 \quad \text{ $\sigma_{\kappa(z)}$ is an orthogonal involution on $\sh A_{\kappa(z)}$, }\\ - 1 \quad \text{ $\sigma_{\kappa(z)}$ is a symplectic involution on $\sh A_{\kappa(z)}$. }\end{cases} \]
We say that $\sigma$ is \emph{orthogonal} or \emph{symplectic at $z$} according to whether $c_\sigma(z)$ is $1$ or $-1$. The resulting function, $c_\sigma : Z \to \{+1,-1\}$ is locally constant, and therefore defines an element of $\Hoh^0(Z; \mu_2)$.
\uriyaf{Added this.}We call $c_\sigma$ the \emph{coarse type} of $\sigma$,
and $\Hoh^0(Z,\mu_2)$ the group of coarse types associated to $(X,\lambda)$.

\uriyaf{Added this paragraph. Remove if not necessary.}
In addition to the coarse type $c_\sigma:Z\to \{\pm 1\}$, it will also be convenient to
consider 
\[
Z_{\orth}( \sigma):=\{z\in Z\,:\,c_\sigma(z)=1\} 
\qquad
\text{and}
\qquad
Z_{\symp}( \sigma):=\{z\in Z\,:\,c_\sigma(z)=-1\},
\]
which are the closed (and therefore open) subschemes of $Z$ on which $\sigma$ is orthogonal and symplectic, respectively.
Clearly, $Z=Z_{\orth}( \sigma) \coprod Z_{\symp}( \sigma)$.

Perhaps surprisingly, two Azumaya algebras with $\lambda$-involution $(\sh A,\sigma)$ and $(\sh A', \sigma')$ having the same degree are locally isomorphic (over  $X/C_2$\uriyaf{Added this.}) if and only if they have the same coarse type  \cite[Thms.~5.17, 5.37]{First2020}; in plain language, they are locally isomorphic over $X/C_2$ if and only if $\sigma_{\kappa(z)}$ and $\sigma'_{\kappa(z)}$ are orthogonal (resp.~symplectic) involutions at precisely the same $\lambda$-fixed points of $X$.\benw{added the plain-language part.}

We will say that a coarse type $t \in \Hoh^0(Z, \mu_2)$ is \emph{constant} if it is constant as a function $Z \to \{\pm 1\}$. 
We will say that $t$ is \emph{realizable} if $t=c_\sigma$ for some Azumaya algebra with $\lambda$-involution $(\sh A,\sigma)$.

The coarse type of any $\lambda$-involution
of an odd-degree Azumaya algebra is identically $1$ 
because   symplectic involutions can only occur in even degrees.

\uriyaf[inline]{Added the following example and remark. Remark answers one of the referee's questions.}

\begin{example}
  \label{ex:lambdatr}
 \begin{enumerate}[label=(\roman*)]
 \item  Let $n$ be a positive integer and let $\sigma:\Mat_{n\times n}(\sh O_X)\to \Mat_{n\times n}(\sh O_X)$
 be the involution $\tau_{I_n}$ in the notation of Example~\ref{ex:standardOrdinary}.
 That is, on sections, $\sigma$ is given by $\sigma(m)=\lambda (m^T)$. The coarse type $c_\sigma$ is identically $1$, since $\lambda$ restricts to the identity on $Z$, and therefore,
 for every $z\in Z$, the involution $\sigma_{k(z)}$ on $\Mat_{n\times n}(k(z))$ is simply matrix transposition.
\item Let $2r$ be an even positive integer. Let $J$ be the $2r \times 2r$ matrix
  \[ J =
    \begin{bmatrix}
      0 & I_r \\ -I_r & 0 
    \end{bmatrix}. \]
    Define $\sigma:\Mat_{2r\times 2r}(\sh O_X)\to \Mat_{2r\times 2r}(\sh O_X)$
 to be the involution $\tau_J$ in the notation of Example~\ref{ex:standardOrdinary}.
 The coarse type $c_\sigma$ is the constant function $-1$, because $\sigma$ is a symplectic involution at every $z\in Z$.
\end{enumerate}
\end{example}

\begin{remark}\label{rm:local}
 Example~\ref{ex:lambdatr} implies that the constant coarse types $\pm1\in \Hoh^0(Z,\mu_2)$ are realizable.
 
 Since the coarse type and the degree together specify an Azumaya algebra with a $\lambda$-involution up to (\'etale) local isomorphism, it also says that $\lambda$-involutions having a constant coarse type are ordinary.
 
 This implies that all $\lambda$-involutions are ordinary Zariski-locally in the following sense: if $(\sh A, \sigma)$ is an Azumaya algebra with $\lambda$-involution, then the restrictions of $(\sh A, \sigma)$ to the $C_2$-invariant open sets $X \sm Z_{\orth}(\sigma)$ and $X \sm Z_{\symp}(\sigma)$ have constant coarse type, and are therefore ordinary.\benw{lots of detail is omitted here, and placed in a comment. Anyone who actually wants to figure this out in detail can do so, I expect.}
\end{remark}

\section{Flat topologies}
\label{sec:flat-topologies}
Suppose $X$ is a scheme endowed with an involution $\lambda:X\to X$ and that a good quotient $q:X\to X/C_2$ exists. \benw{added the next sentence} In this section we temporarily suspend our convention that ``locally'' means ``\'etale-locally'', preferring to specify the topology in question in each instance.

We consider another notion of equivariant locality, using the big fpqc site of the quotient $X/C_2$ rather than the small \'etale site. To define this site, we use covering families $\{f_i : Y_i \to X/C_2\}_{i \in I}$ where each $f_i$ is   flat, and for each affine open $U \subseteq X/C_2$, one can find a finite subset $J \subset I$ and affine open subsets $W_i \subseteq Y_i$ for all $ i \in J$ so that $U_i = \bigcup_{i \in J} f_i(W_i)$. This is the definition of \cite[\href{https://stacks.math.columbia.edu/tag/03NV}{Tag 03NV}]{SP23} and is equivalent to that of  \cite[\S~2.3.2]{Fantechi2005}.

 We will say a sheaf with $\lambda$-involution on $X$ has a property \emph{fpqc-locally} if it has the property locally with respect to the fpqc topology on $X/C_2$. 
\uriyaf{Added a few sentences here.}In particular, one may  define the notion of an extraordinary (resp.\ ordinary, semiordinary)
involution with respect to the fpqc topology, rather than the \'etale topology
as in Section~\ref{sec:gen}; this is the definition
used in the introduction and in Section~\ref{sec:fields}.

It is noted in \cite[Rem.~4.40]{First2020} that the fppf (and therefore fpqc) topology does not have all the formal properties one would like for present purposes. We remedy this with the following proposition, which  allows us to interchange \'etale and fpqc when considering Azumaya algebras with involution, and thus use the \'etale topology exclusively in the remainder of the paper.

\begin{proposition} \label{pr:etaleLocalIsoIsFPQC}
  With notation as above, suppose\/ $2$ is invertible on $X$. Suppose given two Azumaya algebras of degree $n$ with $\lambda$-involution $(\sh A, \sigma)$, $(\sh B, \tau)$. The following are equivalent:
  \begin{enumerate}
  \item \label{pr3:i} the coarse types $c_\sigma, c_\tau: Z \to \{\pm 1\}$ agree;
  \item \label{pr3:ii} $(\sh A, \sigma)$ and $(\sh B, \tau)$ are \'etale-locally isomorphic;
  \item \label{pr3:iii} $(\sh A, \sigma)$ and $(\sh B, \tau)$ are fpqc-locally isomorphic.
  \end{enumerate}
\end{proposition}
\begin{proof}
Let $Z \subseteq X$ denote the ramification locus of $\lambda:X\to X$.
As noted in Section~\ref{sec:CT},
$(\sh A, \sigma)$ and $(\sh B, \tau)$ are \'etale-locally isomorphic if and only if they have the same coarse type, so that \eqref{pr3:i} and \eqref{pr3:ii} are equivalent.

Being fpqc-locally isomorphic is \textit{a priori} a weaker condition than being \'etale-locally isomorphic, so that \eqref{pr3:ii} implies \eqref{pr3:iii}. It suffices to show that if $(\sh A, \sigma)$ and $(\sh B, \tau)$ are fpqc-locally isomorphic, then they have the same coarse type. Since the type of the involution on $\sh A_{\kappa(x)}$ is unchanged by passing to a field extension $\kappa(x) \hookrightarrow E$, it is enough to produce, for any given point $x \in Z\subseteq X$, a map  $\eta : \Spec(E) \to  X$ with image $x$ such that $(\sh A_E, \sigma_E)$ and $(\sh B_E, \tau_E)$ are of the same type.

Suppose $\{f_i : Y_i \to X/C_2\}_{i\in I}$ is an fpqc covering with the property that $(\sh A, \sigma)$ and $(\sh B, \tau)$ become isomorphic as algebras-with-involution after pulling back along each $Y_i \times_{X/C_2} X \to X$. Since $\{Y_i \times_{X/C_2} X \to X\}_{i\in I}$ is also an fpqc covering, 
it is surjective, and so there is $i\in I$ and some point $y \in Y_i \times_{X/C_2} X$ mapping to $x$. 
Observe that $Y_i \times_{X/C_2} X$ carries a $C_2$ action by means of $\lambda$ acting on the second factor. The fixed locus of this $C_2$ action is the closed subscheme $Y_i \times_{X/C_2} Z$, since equalizers commute with pullbacks. The diagram
\[ \begin{tikzcd}
    Y_i \times_{X/C_2} Z \rar \dar & Y_i \times_{X/C_2} X \dar \\ Z \rar & X
\end{tikzcd}\]
is cartesian, which implies that the point $y$ lies in the ramification locus $Y_i \times_{X/C_2} Z$. It is meaningful to ask whether $(\sh A_{\kappa(y)}, \sigma_{\kappa(y)})$ has an orthogonal or a symplectic involution. In either case, it is of the same type as $(\sh B_{\kappa(y)}, \tau_{\kappa(y)})$, since our original two algebras-with-involution are isomorphic over $Y_i\times_{X/C_2} X$.  Consequently, the field extension $\kappa(x) \to \kappa(y)=E$ is a field extension of the kind we were seeking.
\end{proof}

\section{Coarse types and cohomology} \label{sec:ordinary}

\uriyaf[inline]{I changed the title of this section because it is no longer focused on
ordinary involutions.}

We continue to assume that $(X,\lambda)$ is a scheme with involution having a good quotient $q:X\to Y=X/C_2$. We write $Z \subseteq X$ for the fixed subscheme of the involution $\lambda:X\to X$. We assume that $2$ is invertible in $\sh O_X$.

Let $(\sh A,\sigma)$ be an Azumaya algebra with $\lambda$-involution over $X$, and let $c_\sigma\in \Hoh^0(Z,\mu_2)$
be its coarse type.
In \cite[\S 6.3]{First2020}, it was shown that certain cohomological invariants of $c_\sigma$
can determine whether $(\sh A,\sigma)$ is ordinary, semiordinary or extraordinary. In particular, these properties depend only on the coarse type.

Moreover, there is a cohomological calculation to determine whether a coarse type $c\in \Hoh^0(Z,\mu_2)$
is realizable. We recall these cohomological arguments now, and use them to give testable criteria
for the existence of extraordinary $\lambda$-involutions and non-realizable coarse types.

Our notation follows \cite[\S\S 6.2--6.3]{First2020} with the difference that we use calligraphic letters for sheaves,
e.g., our $\sh R$ is $R$ in the notation of \cite{First2020}. 
Recall our standing assumptions that a sheaf over a scheme is
a sheaf over its small \'etale site, and cohomology is \'etale cohomology.

\medskip

Let us write $\sh R$ for $q_* \sh O_X$, which is a sheaf on $Y$. The operation $\lambda$ induces an automorphism of sheaves, which we denote with the same letter: $\lambda: \sh R \to \sh R$. We also write $\sh S$ for $\sh O_Y$.

Write $\sh R^\times$ for the group of units of $\sh R$. There exists a norm map $\sh R^\times \to \sh S^\times$,  which is a map of sheaves of groups on $Y$ that is given by $x \mapsto x^\lambda x$ on sections. We denote the kernel of this map by $\sh N$. There is a map of sheaves $\sh R^\times \to \sh N$ given by $x \mapsto x^\lambda x^{-1}$ on sections. The kernel of this map is the $\lambda$-fixed subgroup of $\sh R^\times$, which is $\sh S^\times$ since $Y$ is a good quotient of $X$ by $C_2$. We define a sheaf $\sh T$ on $Y$ as the cokernel in the short exact sequence
\begin{equation}
  \label{eq:2}
  \begin{tikzcd}[column sep=4em]
    1 \rar & \sh R^\times/\sh S^\times \rar{x\mapsto x^{\lambda}x^{-1}} & \sh N \rar & \sh T \rar & 1.
  \end{tikzcd}
\end{equation}
Associated to short exact sequence \eqref{eq:2}, there is a long exact sequence in   cohomology, and in particular a connecting homomorphism
\begin{equation*} 
\delta^0 : \Hoh^0(Y; \sh T) \to \Hoh^1(Y ; \sh R^\times/ \sh S^\times).
\end{equation*}
Additionally, associated to the inclusion $\sh S^\times \to \sh R^\times$, there is a long exact sequence in cohomology and a connecting homomorphism:
\begin{equation}
\label{eq:delta1} 
\delta^1 : \Hoh^1(Y ; \sh R^\times/ \sh S^\times) \to \Hoh^2(Y; \sh S^\times).
\end{equation}
\uriyaf{Edited a little from here.}Finally,  by \cite[Thm.~5.37]{First2020}, there is a canonical isomorphism
\[ \Hoh^0(Y; \sh T) \cong \Hoh^0(Z; \mu_2) \qquad \text{(the group of coarse types).}\]
We may therefore identify   the source of $\delta^0$ with $\Hoh^0(Z;\mu_2)$ and arrive at the following diagram
\[
\Hoh^0(Z;\mu_2)\xrightarrow{\delta^0} 
\Hoh^1(Y ; \sh R^\times/ \sh S^\times) \xrightarrow{\delta^1}
\Hoh^2(Y; \sh S^\times).
\]

\benw[inline]{The paragraph below has been rewritten}
\uriyaf[inline]{Also made a few edits.}
Next, the functor $q_*$ preserves epimorphisms of groups (this is proved in \cite[Thm.~4.35(i)]{First2020}, given the definition of ``exact quotient'' in \cite[Def.~4.18]{First2020}), and as a consequence, it has a wealth of good cohomological properties. For instance, \cite[Thm.~4.23(ii)]{First2020} tells us that we can identify
\begin{equation}\label{eq:identification}
\Hoh^i(Y; \sh R^\times) = \Hoh^i(X; \sh O_X^\times) \quad \text{for all $i$.} 
\end{equation}
Composing the identification $\Hoh^i(X,\sh O_X^\times)\to \Hoh^i(Y,\sh R^\times)$
with the map $\Hoh^i(Y;\sh R^\times)\to \Hoh^i(Y;\sh O_Y^\times)$ induced
by the norm   $\sh R^\times \to \sh S^\times = \sh O_Y^\times$ 
gives the map
\[ \transf : \Hoh^i(X; \sh O_X^\times) \to \Hoh^i(Y; \sh O_Y^\times) , \]
called the \textit{cohomological $\lambda$-transfer map} in \cite[\S 6.2]{First2020},

We are now in position to state the results we need from  \cite{First2020}. Suppose $(\sh A, \sigma)$ is an Azumaya algebra with $\lambda$-involution on $X$, having coarse type $c_\sigma \in \Hoh(Z;\mu_2)\cong \Hoh^0(Y; \sh T)$ and (cohomological) Brauer class $[A] \in \Hoh^2(X; \sh O_X^\times)$, and let $c\in \Hoh^2(Z;\mu_2)$ be any coarse
type. Then:
\begin{enumerate}[label=Fact \arabic*]
\item: \label{docFactConsistency} $\delta^1 \circ \delta^0 (c_\sigma) = \transf([\sh A])$ (see \cite[Theorem 6.10]{First2020});
\item: \label{docFactOrdinary} $\sigma$ is ordinary if and only if $\delta^0(c_\sigma) = 0$;
\item: \label{docFactSemiordinary} $\sigma$ is semiordinary if and only if $\delta^1 \circ \delta^0(c_\sigma) = 0$;
\item: \label{docFactRealizable} $c$ is realizable if and only if $\delta^1\circ\delta^0(c)\in \im(\transf:\Br(X)\to \Hoh^i(Y; \sh O_Y^\times))$
.\uriyaf{We do not really need this, and one direction follows from Fact 1, but I think it is nice to include it for completeness.}
\end{enumerate}
\ref{docFactConsistency} is contained in \cite[Theorem 6.10]{First2020}
and the remaining facts are \cite[Theorem 6.19]{First2020}.
Facts 2 and 3 imply that the coarse type of a  $\lambda$-involution determines whether it is ordinary
or semiordinary. It therefore makes sense to talk about ordinary and semiordinary coarse types.

\begin{proposition} \label{pr:allOrdinary}
  If $\Pic(X)=0$, then every semiordinary involution is ordinary.
\end{proposition}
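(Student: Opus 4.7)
Since $(\sh A,\sigma)$ is semiordinary, by definition it is locally isomorphic to some $(\End_{\sh O_X}(\sh P),\tau)$ for a rank-$n$ locally free sheaf $\sh P$ and a $\lambda$-involution $\tau$. By transitivity of local isomorphism, it suffices to produce a global $m\in \Hoh^0(X;\GL_n(\sh O_X))$ such that $(\End_{\sh O_X}(\sh P),\tau)$ and $(\Mat_{n\times n}(\sh O_X),\tau_m)$ are locally isomorphic. The coarse-type classification recalled in Section~\ref{sec:CT} reduces this further to finding $m$ with $c_{\tau_m} = c_\tau$ as functions on the fixed locus $Z$.

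Recall from Example~\ref{ex:standardOrdinary} that $\tau_m$ is defined via a datum $\lambda(m^T) = fm$, and its coarse type at $z \in Z$ is precisely $f(z)\in\{\pm 1\}$. So it is enough to find $f \in \Hoh^0(X;\sh O_X^\times)$ with $f\lambda(f)=1$ and $f|_Z = c_\tau$, together with an $m$ satisfying $\lambda(m^T)=fm$. By standard theory, $\tau$ corresponds to a non-degenerate $\lambda$-sesquilinear form on $\sh P$ valued in an invertible $\sh O_X$-module $\sh L$, equipped with a ``symmetry unit'' $f$ having exactly the required properties. The hypothesis $\Pic(X) = 0$ enters here by making $\sh L$ trivializable, so that after fixing an isomorphism $\sh L \cong \sh O_X$, the symmetry $f$ becomes a global section of $\sh O_X^\times$ rather than a datum twisted by an invertible module.

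Given such $f$, an explicit $m$ satisfying $\lambda(m^T)=fm$ can be written down by a block-diagonal construction: for even $n = 2k$, take $k$ copies of the $2\times 2$ block $\bigl[\begin{smallmatrix} 0 & 1 \\ \lambda(f) & 0\end{smallmatrix}\bigr]$; for odd $n$, the non-existence of symplectic involutions in odd dimension forces $c_\tau = +1$ at every fixed point, and one can handle a leftover $1 \times 1$ block accordingly. The resulting $\tau_m$ has coarse type $c_\tau$, and the coarse-type classification supplies the required local isomorphism between $(\End(\sh P),\tau)$ and $(\Mat_{n\times n}(\sh O_X), \tau_m)$; composing with the semiordinary local isomorphism from the start shows $(\sh A,\sigma)$ is ordinary.

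The main obstacle is the extraction of the global scalar $f$: $\Pic(X) = 0$ is precisely what guarantees that the intrinsic ``sign'' of the form representing $\tau$ lives in $\sh O_X^\times$ rather than in the sections of a possibly non-trivial line bundle. Without this hypothesis, the symmetry datum is only defined modulo an element of $\Pic(X)$ and the argument breaks down. Once $f$ is in hand, the block construction of $m$ and the invocation of the coarse-type theorem are routine.
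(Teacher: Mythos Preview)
Your approach is essentially correct and reaches the same conclusion, but it is genuinely different from the paper's argument. The paper proceeds purely cohomologically: it quotes from \cite{First2020} two obstruction maps
\[
\delta^0 : \Hoh^0(Z;\mu_2)\to \Hoh^1(Y;R^\times/S^\times),\qquad
\delta^1 : \Hoh^1(Y;R^\times/S^\times)\to \Hoh^2(Y;S^\times),
\]
with the property that $\sigma$ is semiordinary iff $\delta^1\delta^0(c_\sigma)=0$ and ordinary iff $\delta^0(c_\sigma)=0$. Since $\delta^1$ sits in a long exact sequence whose preceding term is $\Hoh^1(Y;R^\times)\cong\Hoh^1(X;\sh O_X^\times)=\Pic(X)$, the hypothesis $\Pic(X)=0$ makes $\delta^1$ injective, and the implication ``semiordinary $\Rightarrow$ ordinary'' follows formally. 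Your argument instead unpacks the semiordinary datum concretely: the involution $\tau$ on $\End(\sh P)$ is the adjoint of a nondegenerate $\lambda$-sesquilinear form with values in a line bundle $\sh L$, the hypothesis $\Pic(X)=0$ trivializes $\sh L$ so that the symmetry becomes an honest unit $f\in\Hoh^0(X;\sh O_X^\times)$ with $f\lambda(f)=1$ and $f|_Z=c_\tau$, and then an explicit block matrix $m$ with $\lambda(m^T)=fm$ realizes the same coarse type. What your route buys is independence from the obstruction-theoretic machinery of \cite{First2020}; what the paper's route buys is brevity and a clear identification of \emph{where} the Picard hypothesis enters (injectivity of a specific boundary map).

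Two small points to tighten. First, the appeal to ``standard theory'' that $\tau$ on $\End(\sh P)$ arises from a line-bundle-valued hermitian form deserves a precise reference in this scheme-theoretic generality; it is true, but it is exactly the kind of statement whose careful formulation occupies several pages of \cite{First2020}. Second, your treatment of odd $n$ is slightly imprecise: writing a leftover $1\times 1$ block $[g]$ with $\lambda(g)=fg$ requires $f$ to be a Hilbert-90 coboundary, which is not automatic. The clean fix is to observe that for odd $n$ one has $c_\tau\equiv +1$, so one may simply discard the $f$ extracted from the form and take $m=I_n$ with the constant unit $1$; the coarse-type classification then still gives the required local isomorphism.
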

\begin{proof}
  In the notation above, we can identify
  \[ \Pic(X) = \Hoh^1(X; \sh O^\times) = \Hoh^1(Y; \sh R^\times). \]
  If this group vanishes, then $\delta^1$, which is a connecting morphism in the long exact sequence associated to
  \[ 1 \to \sh S^\times \to \sh R^\times \to \sh R^\times/\sh S^\times \to 1, \]
  is injective. Suppose $\sigma$ is a semiordinary involution. Then $\delta^1 \circ \delta^0(c_\sigma) = 0$ by  \ref{docFactSemiordinary} above. Injectivity of $\delta^1$ then implies that $\delta^0(c_\sigma) = 0$, so by \ref{docFactOrdinary}, the involution $\sigma$ is actually ordinary.
\end{proof}

\begin{prop} \label{pr:ordinaryExtends} If a  $\lambda$-involution $\sigma$ of an Azumaya algebra $\sh A$ over $X$ is ordinary, then the coarse type
  $c_\sigma \in \Hoh^0(Z; \mu_{2 })$ extends to a global section $f \in \Hoh^0(X; \sh O_X^\times)$, which satisfies
  \[ \lambda(f) = f^{-1}. \]
\end{prop}
\begin{proof}
  Since $\sigma$ is locally isomorphic to an involution as in Example \ref{ex:standardOrdinary}\uriyaf{Fixed wrong reference here.}, we may replace $(\sh A,
  \sigma)$ by $(\Mat_{n \times n}(\sh O), \tau_m)$ as in that example, without changing the coarse type. Let $f\in\Hoh(X,{\sh O}_X^\times)$ be a
  global section satisfying
  \[\lambda(m^T) = f  m. \]
  Then application of $\lambda$ and taking transpose-inverses yields
  $m^{-1} = \lambda(f^{-1}) \lambda(m^{-T})$
  which rearranges to give \[\lambda(m^T) = \lambda(f^{-1}) m. \]
  Together, the displayed equations imply 
  $\lambda(f)\cdot f=1$. This means that the pullback of $f$ along $Z\to X$, denoted
  $f_Z$,
  is a square root of $1$. By \cite[Lem.~2.17]{First_2022_octagon}, in order
  to show that $f_Z=c_\sigma$, it suffices to check that $f(z)=c_\sigma(z)$
  in $\kappa(z)$ for every $z\in Z$.
  
  Suppose $z \in Z$. Then $(\Mat_{n \times n}(\sh O_X))_{\kappa(z)} \iso \Mat_{n \times n}(\kappa(z))$ and the
  involution $\tau_{z} : =\tau_{\kappa(z)}$ is given by the formula
  \[ \tau_z (n) = m(z)^{-1} n^T m(z), \]
  where $m(z)$ is the pullback of $m$ under $\Spec \kappa(z)\to X$ That is, $m(z)$ is an $n \times n$ matrix over $\kappa(z)$ satisfying
  \[ m(z)^T = f(z) m(z). \]
  Thus $\tau_z$ is the involution associated to the nondegenerate bilinear form
  \[ B(v,w) = v^T m(z) w \]
  on $\kappa(z)^n$. 
  The form $B$ is symmetric---and so $\tau_z$ is orthogonal---if and only if $f(z) = 1$, and the form $B$ is
  skew-symmetric---and so $\tau_z$ is symplectic---if and only if $f(z) = -1$. In both cases, 
  $f(z)$ agrees with the value of the coarse type $c_\sigma$ at $z$. 
\end{proof}

Using the previous results, we can give a criterion for
a $\lambda$-involution to be extraordinary and for a coarse type not to be realizable.

\begin{proposition}\label{pr:criterion}
	Let $k$ be a field of characteristic different from $2$, let $X$ be a $k$-scheme with an involution $\lambda$ that
	is also  a $k$-morphism and let $Z$ be the ramification locus of $\lambda$. 
	Suppose that $\Hoh^0(X,\sh O^\times_X)=k^\times$
	and $\Pic(X)=0$. 
	\begin{enumerate}[label=(\roman*)]
		\item \label{i:prcriti} If the coarse type of an Azumaya algebra with $\lambda$-involution $(\sh A,\sigma)$ over
		$X$ is not constant on $Z$, then $\sigma$ is extraordinary.
		\item \label{i:pcritii} If the $2$-torsion subgroup of $\Br(X)$ is $0$\benw{changed}, then for all $\lambda$-involutions of Azumaya algebras over $X$, the coarse type is constant.
	\end{enumerate}
\end{proposition}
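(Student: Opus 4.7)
My plan for \ref{i:prcriti} is to argue by contrapositive: assume $\sigma$ is semiordinary, and deduce that $c_\sigma$ is constant. Since $\Pic(X) = 0$, Proposition~\ref{pr:allOrdinary} upgrades $\sigma$ to ordinary, and Proposition~\ref{pr:ordinaryExtends} then provides a global unit $f \in \Hoh^0(X, \sh O_X^\times)$ satisfying $\lambda(f) = f^{-1}$ whose pullback to $Z$ agrees with $c_\sigma$. By the hypothesis $\Hoh^0(X, \sh O_X^\times) = k^\times$ we have $f \in k^\times$, and because $\lambda$ is a $k$-morphism it acts trivially on $k^\times$; hence $\lambda(f) = f$, so $f^2 = 1$ and $f = \pm 1 \in k^\times$. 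Thus $c_\sigma = f|_Z$ is the constant function $\pm 1$, contradicting the hypothesis of \ref{i:prcriti}.

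For \ref{i:pcritii}, I observe that by \ref{i:prcriti} it suffices to prove that under the additional hypothesis $\Br(X)[2] = 0$, every $\lambda$-involution of an Azumaya algebra over $X$ is semiordinary (since by our proof of \ref{i:prcriti}, semiordinary already forces $c_\sigma$ to be constant). Recall from the proof of Proposition~\ref{pr:allOrdinary} that $\sigma$ is semiordinary iff the obstruction class $\delta^1 \circ \delta^0(c_\sigma) \in \Hoh^2(Y; S^\times)$ vanishes. Since $\Hoh^0(Z; \mu_2)$ is $2$-torsion, so is this obstruction.

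The plan is to connect this $2$-torsion obstruction to the vanishing hypothesis on $\Br(X)[2]$. The Kummer sequence on $X$, combined with $\Pic(X) = 0$ and $\Br(X)[2] = 0$, implies $\Hoh^2(X; \mu_2) = 0$; exactness of $q_*$ then yields $\Hoh^2(Y; q_*\mu_2) = \Hoh^2(X; \mu_2) = 0$. The plan is to trace the $2$-torsion obstruction class through a suitable diagram of Kummer sequences on $Y$ (for both $S^\times$ and $R^\times$) together with the long exact sequence for $S^\times \to R^\times \to R^\times/S^\times$ underlying $\delta^1$, so as to exhibit it as the image of a class in one of these vanishing groups.

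The hard part will be this cohomological bookkeeping. The obstruction naturally lives in $\Hoh^2(Y; S^\times)$, while the vanishing hypothesis is phrased on $X$; the principal task is therefore to construct a commutative diagram of Kummer and long exact sequences that exhibits $\delta^1 \circ \delta^0(c_\sigma)$ as the image of a class in $\Hoh^2(X; \mu_2)$ or $\Hoh^2(Y; q_*\mu_2)$. Given the formal properties of these long exact sequences and the fact that $q_*$ is exact, such a diagram should be available, after which the vanishing of the relevant groups forces the obstruction to vanish.
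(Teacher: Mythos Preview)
Your argument for \ref{i:prcriti} is correct and matches the paper's proof essentially verbatim.

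For \ref{i:pcritii}, your approach diverges from the paper's, and the part you defer as ``cohomological bookkeeping'' is not a routine exercise but the entire difficulty. Two concrete problems:

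\begin{itemize}
\item Since $\Pic(X)=0$, the map $\delta^1$ is \emph{injective} (this is exactly the content of Proposition~\ref{pr:allOrdinary}). Hence $\delta^1\delta^0(t)=0$ is equivalent to $\delta^0(t)=0$, i.e., to $t$ being the coarse type of an \emph{ordinary} involution. By Proposition~\ref{pr:ordinaryExtends} and $\Hoh^0(X,\sh O_X^\times)=k^\times$, that in turn forces $t$ to be constant. So ``the obstruction vanishes for $t$'' is \emph{equivalent} to ``$t$ is constant''---the very statement you are trying to prove. Your proposed diagram chase, as sketched, takes only the class $t\in\Hoh^0(Z;\mu_2)$ as input and never uses that $t$ is actually realized by some $(\sh A,\sigma)$; but for a nonconstant $t$ the obstruction does \emph{not} vanish, so no such argument can succeed without bringing $\sh A$ back into the picture.
\item The vanishing you establish is $\Hoh^2(Y;q_*\mu_2)=\Hoh^2(X;\mu_2)=0$, whereas the $2$-torsion obstruction lives in $\Hoh^2(Y;S^\times)=\Hoh^2(Y;\sh O_Y^\times)$, controlled (via Kummer on $Y$) by $\Hoh^2(Y;\mu_{2,Y})$. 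The sheaves $\mu_{2,Y}$ and $q_*\mu_{2,X}$ are different, and there is no general reason the former has vanishing $\Hoh^2$ just because the latter does.
\end{itemize}

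The paper avoids this entirely with a tensor-power trick. Writing $\deg\sh A=n=m\cdot 2^r$ with $m$ odd, one passes to $(\sh A^{\otimes m},\sigma^{\otimes m})$, whose Brauer class is $2^r$-torsion and hence trivial by hypothesis; thus $\sh A^{\otimes m}$ is Brauer-trivial, so $\sigma^{\otimes m}$ is tautologically semiordinary, and the argument of \ref{i:prcriti} gives that $c_{\sigma^{\otimes m}}$ is constant. Finally $c_{\sigma^{\otimes m}}=c_\sigma^m=c_\sigma$ since $m$ is odd. This uses the datum $\sh A$ (not just $t$) in an essential way, which is exactly what your sketch is missing.
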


\begin{proof}
\ref{i:prcriti}
	It is enough to show that if the involution
	is semiordinary, then  $c_\sigma$ is constant.
	By Proposition~\ref{pr:allOrdinary} and the assumption $\Pic X=0$, the involution
	$\sigma$ is ordinary. By Proposition~\ref{pr:ordinaryExtends},
	$c_\sigma$ is the restriction of some global section $f\in \Hoh^0(X,\sh O^\times_X)$,
	which is constant by our assumptions.
	
	\ref{i:pcritii}
	Let $\sigma$ be a $\lambda$-involution on an Azumaya algebra
	$\sh A$ on $X$.
	The assumption 	$\Hoh^0(X,\sh O^\times_X) =k^\times$ implies
	in particular that $X$ is connected, so $\sh A$ must be of constant degree
	$n$, and so its Brauer class is $n$-torsion.
	
	Suppose first that $n$ is a power of $2$.
	Then our assumption on $\Br (X)$ implies that $\sh A$
	is Brauer-trivial, hence $\sigma$ is semiordinary.
	The same argument as in \ref{i:prcriti} now  shows that $c_\sigma$ is constant.
	
	In general, write $n=m2^r$ with $m$ an odd integer and $r\geq 0$.
	The involution $\sigma$ induces a $\lambda$-involution 
	$\sigma^{\otimes m}$ on the Azumaya algebra ${\sh B}={\sh A}^{\otimes m}$.
	Since ${\sh B}$ represents a $2^r$-torsion element in the Brauer group, the last
	paragraph tells us that $c_{\sigma^{\otimes m}}$ is constant.
	By checking at each point of $Z$, we see that $c_{\sigma^{\otimes m}}=c_\sigma^m$.
	Furthermore, since $m$ is odd and $\Hoh^0(Z,\mu_2)$ is $2$-torsion,
	we have $c_\sigma^m=c_\sigma$. As a result,  $c_\sigma=c_{\sigma^{\otimes m}}$ 
	is constant.
\end{proof}

We apply parts \ref{i:prcriti} and \ref{i:pcritii} of Proposition~\ref{pr:criterion} in the following two sections.
\benw{added material}Before that, we also show that supporting extraordinary $\lambda$-involutions is a Brauer-invariant property.
\begin{proposition}\label{pr:supportExtraordinaryIsBrauerInv}
  Suppose $(\sh A, \sigma)$ and $(\sh A', \sigma')$ are two Azumaya algebras with $\lambda$-involutions over $X$, and suppose $\sh A$ and $\sh A'$ are Brauer-equivalent over $X$. Then $\sigma$ is extraordinary if and only if $\sigma'$ is.
\end{proposition}
\begin{proof}
  The situation is symmetric, so it suffices to assume $\sigma$ is extraordinary and to prove that $\sigma'$ is as well. Write $t,t'$ for the coarse types of $\sigma, \sigma'$, and $[\sh A], [\sh A'] \in \Hoh^2(X; \sh O_X^\times)$ for the Brauer classes of $\sh A, \sh A'$. Using \ref{docFactConsistency} and  \ref{docFactSemiordinary} above, we see that
  \[ \transf([\sh A]) = \delta^1 \circ \delta^0 (t) \neq 0 \in \Hoh^2(Y; \sh S^\times).\]
 The equality $[\sh A ] = [\sh A']$ then gives us
  \[ \transf([\sh A']) = \transf([\sh A]) \neq 0 \]
  so that \ref{docFactConsistency} and \ref{docFactSemiordinary} imply that $(\sh A',\sigma')$ is not a semiordinary involution.
\end{proof}

\begin{remark}
  It is a shortcoming of \cite[\S 6]{First2020} that this corollary of its main results does not appear there. In truth, the authors of that paper did not believe, when the paper was written, that extraordinary involutions existed.
\end{remark}

\benw{end of added material}

\benw[inline]{I unwound additions by Uriya in the section above, since I thought the strengthening of the paper was small relative to the amount of explaining we would have to do.}

\begin{remark}\label{rm:ordinaryBreq}
  \begin{enumerate}[label=(\roman*)]
    \item If $(\sh A,\sigma)$ is an Azumaya algebra with  $\lambda$-involution,
 and $\sh A'$ is an Azumaya algebra Brauer-equivalent to $\sh A$, then
 it can happen that $\sh A'$  does not admit involutions.
 Examples and a discussion of this may be found in \cite[\S11]{First2015}
 and \cite[\S4]{Saltman1978}.

\item Proposition~\ref{pr:supportExtraordinaryIsBrauerInv} is false if we replace ``extraordinary'' with ``ordinary''. Let $(\sh B, \sigma)$ be an Azumaya algebra with an involution that is semiordinary but not ordinary, as constructed in e.g., \cite[Example~7.8]{First2020} or in Example \ref{ex:semiNotOrd} below. The algebra with involution $(\sh B, \sigma)$ is locally isomorphic (over $X/C_2$) to some Brauer-trivial algebra $(\iEnd(V), \tau)$, where $\tau$ is also semiordinary but not ordinary. But $\iEnd(V)$ itself is Brauer-equivalent to the matrix algebras $\Mat_{n \times n}(\sh O_X)$, which carry ordinary involutions.
  \end{enumerate}
\end{remark}
\section{An extraordinary involution}

We work over an algebraically closed base field $k$ of characteristic different from $2$. 
In particular, schemes are over $k$, morphisms are $k$-morphisms, products are taken over $\Spec k$ and so on.

\uriyaf[inline]{Changed everything from $2\times 2$ matrices
to general  $n\times n$ matrices for even $n$.}

Let $n$ be a positive integer, later assumed to be even.
We write $\Mat_{n \times n}$ for the affine space of $n\times n$ matrices over $k$, isomorphic to $\A^{n^2} $, and $\PGL_n$ for the $k$-algebraic group of $k$-algebra automorphisms of $\Mat_{n\times n}(k)$.

  The map $\GL_n\to \PGL_n$ sending an invertible matrix to its corresponding inner automorphism
  is surjective when both groups are regarded as sheaves on the big Zariski site of $\Spec k$.

Let $r \ge 3$ be an integer. Let $U$ be the open subvariety of $\Mat^r_{n \times n}$ whose closed points are $r$-tuples in $ \Mat_{n \times n}(k)^r$ that generate
$\Mat_{n \times n}(k)$ as a $k$-algebra. We observe that $U$ is the complement in $\Mat^r_{n \times n}$ of a closed
subvariety of codimension $r-1$; see \cite[Prop.~7.1]{First2022}, for instance.

Define two $k$-varieties
\[ \tilde Y = U \times U \times \Mat_{n \times n}, \qquad \tilde X = U \times U. \] The group $\PGL_n$ acts on
$\tilde Y$ from the right by the formula
\[ (a_1, \dots, a_r; b_1, \dots, b_r; m)\cdot [h] = (h^{-1}a_1h, \dots, h^{-1}a_rh;\, h^T b_1 h^{-T}, \dots, h^T b_r h^{-T};\,
  h^{-1} m h), \]
where here, $h$ is a lift to $\GL_n$ of $[h]$\benw{modified}.
There is a similar action on $\tilde X$, \uriyaf{added the following.}given by omitting the $m$-component. 

The group $C_2$ acts on $\tilde Y$ with the nontrivial element acting as
\[ (a_1, \dots, a; b_1, \dots, b_r; m) \mapsto (b_1, \dots, b_r; a_1, \dots , a_r; m^T ) \] and similarly for
$\tilde X$. These actions assemble to give an action of $\Gamma = \PGL_n \rtimes C_2$.  Write
$\tilde p : \tilde Y \to \tilde X$ for the projection, which is a $\Gamma$-equivariant morphism of varieties by direct
calculation.

The map $\tilde p : \tilde Y \to \tilde X$ is a trivial vector bundle. Write $\tilde{\sh A}$ for the sheaf of sections
of $\tilde p$, which is a free $\sh O_{\tilde X}$-module. Addition and multiplication in $\Mat_{n \times n}$ endow
$\tilde{\sh A}$ with the structure of an $n\times n$ matrix algebra over $\tilde X$.

\begin{proposition} \label{pr:stability}
    The varieties $\tilde X$ and $\tilde Y$ are stable, in the sense of \cite{Mumford1994}, for the $\PGL_n$-action and the trivial invertible sheaf.
\end{proposition}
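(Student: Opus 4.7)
The plan is to verify directly the two hallmarks of stability in the sense of \cite{Mumford1994} for the trivial $\PGL_2$-linearization: every orbit is closed, and every stabilizer is finite (in fact, trivial). Since $\tilde X$ and $\tilde Y$ are quasi-affine and $\PGL_2$-invariant inside affine ambient varieties, and the discriminant of the trace form evaluated on words in the $a_i$ (resp.\ in the $b_i$) provides $\PGL_2$-invariant principal affine opens covering them, verifying these two conditions suffices.

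Trivial stabilizers are immediate. If $[h]\in \PGL_2$ fixes $(a_1,\dots,a_r;b_1,\dots,b_r;m)\in \tilde Y$, then $h^{-1}a_i h=a_i$ for every $i$, so $h$ commutes with the $k$-subalgebra of $\Mat_{2\times 2}(k)$ generated by the $a_i$, which is all of $\Mat_{2\times 2}(k)$; hence $h$ is scalar and $[h]=1$ in $\PGL_2$. The same computation, using just the $a_i$, handles $\tilde X$.

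For closed orbits, my starting point is the classical fact from the geometry of representation varieties that the conjugation action of $\PGL_n$ on $\Mat^r_{n\times n}$ has closed orbit with trivial stabilizer at every $r$-tuple generating $\Mat_{n\times n}$ as a $k$-algebra---by Burnside's theorem these are precisely the absolutely irreducible tuples, which form the Mumford-stable locus of the conjugation action. Thus each $\PGL_2$-orbit in $U$ under standard conjugation is closed in $\Mat^r_{2\times 2}$, a fortiori in $U$. The action $\vec b\mapsto h^T \vec b h^{-T}$ on the second factor is standard conjugation precomposed with the automorphism $[h]\mapsto [h^{-T}]$ of $\PGL_2$, and so has closed orbits as well. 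To obtain closedness on $\tilde X=U\times U$: the $\PGL_2$-orbit of $(\vec a,\vec b)$ projects via the first projection to the closed orbit $\PGL_2\cdot\vec a\subseteq U$, and since the stabilizer of $\vec a$ alone is already trivial, it coincides with the graph over $\PGL_2\cdot\vec a$ of the regular map $h^{-1}\vec a h\mapsto h^T\vec b h^{-T}$, hence is closed in $\tilde X$. The identical graph argument applied over $\tilde X$ settles $\tilde Y=\tilde X\times\Mat_{2\times 2}$.

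The chief obstacle I anticipate is invoking the classical closed-orbit fact in a form applicable here (for $\PGL_2$ rather than $\GL_2$, and for the twisted action on the second factor) and tracking the transpose bookkeeping carefully: the map $h\mapsto h^T$ is only an anti-automorphism of $\GL_2$, but its composition with inversion descends to an honest automorphism of $\PGL_2$ under which the twisted conjugation becomes standard conjugation---this is what lets us transport closed-orbitness between the two $U$-factors. A direct proof of closed orbits via the Hilbert--Mumford criterion---checking that no one-parameter subgroup of $\PGL_2$ drives a generating tuple to a non-generating limit---would serve as a backup if a convenient citation is not readily available.
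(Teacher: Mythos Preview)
Your argument is correct and takes a genuinely different route from the paper's. The paper does not verify closed orbits and finite stabilizers directly; instead it constructs a $\PGL_2$-equivariant morphism $f:\tilde Y\to V_N(\Mat_{2\times 2})$ into a Stiefel variety by sending the tuple $(a_1,\dots,a_r;b_1,\dots,b_r;m)$ to the list of words of bounded length in the $a_i$ (using Pappacena's bound) followed by the same words in the $b_i^T$ and then $m$, embeds $\PGL_2\hookrightarrow\GL(\Mat_{2\times 2})$, observes that the Stiefel variety is stable for $\GL(\Mat_{2\times 2})$ (the Grassmannian being a geometric quotient), hence for $\PGL_2$, and then pulls stability back along $f$ via \cite[Prop.~1.18]{Mumford1994}. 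Your approach is more hands-on: you directly produce the invariant affine cover via trace-form Gram determinants, reduce closed-orbitness on $\tilde X$ and $\tilde Y$ to the single factor $U$ by your graph argument (valid because the scheme-theoretic stabilizer of a generating tuple is trivial, so the orbit map $\PGL_2\to\PGL_2\cdot\vec a$ is an isomorphism), and then invoke the classical Artin--Procesi fact that absolutely irreducible tuples are exactly the stable locus for simultaneous conjugation. What your approach buys is avoidance of the Stiefel-variety machinery and of Pappacena's explicit length bound; what the paper's approach buys is a one-line reduction to a single off-the-shelf stability statement, with no need to argue closed-orbitness on products or to track the transpose twist. If you follow your route, be sure to give a precise citation for the stable locus of $\PGL_n$ acting on $\Mat_{n\times n}^r$ and to note that the stabilizer is trivial \emph{as a group scheme} so that the orbit map is a closed immersion in all characteristics.
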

\begin{proof}
    We sketch the proof for $\tilde Y$. The proof for $\tilde X$ is similar.

    For any integer $N$, write $V_N(\Mat_{n \times n})$ for the Stiefel variety of $N$-tuples of matrices $(c_1, \dots, c_N)$ that span $\Mat_{n \times n}$ as a free module.
    
    By \cite{Pappacena1997}, there is\uriyaf{Changed $5$ to $\ell$ hereon.} some natural number $l$ 
    such that an $r$-tuple $(a_1, \dots, a_r)$ of matrices generates $\Mat_{n \times n}(k)$ as an algebra if and only if the set of words of length $l$ in $(a_1, \dots, a_r)$ generates $\Mat_{n \times n}(k)$ as a vector space. We obtain in this way a morphism $f: \tilde Y \to V_N(\Mat_{n \times n})$ for some large $N$, given on points by sending $(a_1, \dots, a_r; b_1, \dots, b_r; m)$ to the list of words of length at most $l$ in the $a_i$ (ordered lexicographically, for instance) followed by the same words in $b_1^T,\dots,b_r^T$  and finally by $m$. 

    We embed $\PGL_n$ in the linear algebraic group $\GL(\Mat_{n \times n})$ of vector space automorphisms of $\Mat_{n \times n}$. In this way, $f : \tilde Y \to  V_N(\Mat_{n \times n})$ becomes $\PGL_n$-equivariant. The action of $\GL(\Mat_{n \times n})$ is stable on the Stiefel variety for some linearization of the trivial invertible sheaf since the Grassmannian is a geometric quotient, and \textit{a fortiori} it is stable for the $\PGL_n$-action. From here \cite[Prop. 1.18]{Mumford1994} proves what we want.
\end{proof}

\begin{proposition} \label{pr:schemThFree}
    The actions of $\PGL_n$ on the varieties $\tilde X$ and $\tilde Y$ are free and proper.
\end{proposition}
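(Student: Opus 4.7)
The plan is to treat freeness and properness separately. Freeness will follow from a direct stabilizer computation exploiting the algebra-generation condition built into $U$, while properness will be a formal consequence of Proposition~\ref{pr:stability}.

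For freeness, I would argue pointwise on geometric points. Consider a $K$-point $(a_1,\dots,a_r;\, b_1,\dots,b_r;\, m)$ of $\tilde Y$, with $K/k$ algebraically closed, and suppose a class $[h] \in \PGL_2(K)$, represented by $h \in \GL_2(K)$, stabilizes it. The first $r$ coordinates of the action formula force $h^{-1} a_i h = a_i$ for every $i$, so $h$ centralizes the $K$-subalgebra of $\Mat_{2\times 2}(K)$ generated by $a_1,\dots,a_r$. By the very definition of $U$, this subalgebra is all of $\Mat_{2\times 2}(K)$, whose centralizer in $\GL_2(K)$ is the centre---the scalar matrices---so $[h]$ is trivial in $\PGL_2(K)$. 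The identical argument, ignoring the $b_i$ and $m$ coordinates, handles $\tilde X$.

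For properness, I would invoke Proposition~\ref{pr:stability} together with the standard GIT result that a reductive group acts properly on its locus of (properly) stable points; see \cite[Converse~1.13]{Mumford1994}. Since Proposition~\ref{pr:stability} asserts that every point of $\tilde X$ and $\tilde Y$ is stable, the $\PGL_2$-action on each of these varieties is proper.

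The main obstacle is essentially bookkeeping: one must confirm that the flavour of ``stability'' produced by Proposition~\ref{pr:stability} is the ``properly stable'' variant, which directly yields properness of the action morphism $\PGL_2 \times \tilde Y \to \tilde Y \times \tilde Y$ (rather than merely closed orbits with finite stabilizers). Given the freeness already established and the explicit linearization used in Proposition~\ref{pr:stability}, this is the standard output of the Hilbert--Mumford apparatus, and no further substantive work is required.
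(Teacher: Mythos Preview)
Your argument is correct but proceeds differently from the paper. The paper does not separate freeness and properness: it observes that both $\tilde X$ and $\tilde Y$ factor as $U \times Z$ for a suitable $\PGL_2$-variety $Z$, cites \cite[Lem.~3.2]{First2022} for the fact that the action map $\PGL_2 \times U \to U \times U$ is already a closed immersion, and then shows directly that $\PGL_2 \times U \times Z \to (U \times Z)\times (U \times Z)$ is a closed immersion by writing it as a composite of two closed immersions. Your centralizer computation for freeness is essentially the content of that cited lemma, so on that half you are reproducing rather than bypassing the paper's input; one small point is that the phrasing ``pointwise on geometric points'' undersells your own argument, since the centralizer reasoning works verbatim over any base ring and therefore gives scheme-theoretic triviality of stabilizers, which is what is actually needed (and matters in positive characteristic). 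For properness you take a genuinely different route, deducing it from Proposition~\ref{pr:stability} via the GIT fact that a reductive group acts properly on its stable locus. This is legitimate and arguably more conceptual, but it leans on heavier machinery and on the stability result, whereas the paper's product-of-closed-immersions argument is elementary and independent of Proposition~\ref{pr:stability}. Either way one arrives at the action map being a proper monomorphism, hence a closed immersion, which is what the subsequent appeal to \cite[Prop.~0.9]{Mumford1994} requires.
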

\begin{proof}
   Write $G=\PGL_n$. Both $\tilde X$ and $\tilde Y$ can be decomposed as a product of $G$-varieties $U \times Z$ for some $Z$.  In \cite[Lem. 3.2]{First2022}, it was proved that the action of $\PGL_2$ on $U$ is free and proper. The composite map
   \[  G \times U \times Z \to G \times U \times Z \times Z \to U \times U \times Z \times Z \]
   given by $(g,u,z) \mapsto (g, u, z,gz) \mapsto (u, gu, z,gz)$ is a closed embedding, being a composite of two closed embeddings, which is what had to be shown.
\end{proof}

\benw{lightly edited text below. We want to make it clear where the two propositions above are required.}
 
The stability result, Proposition \ref{pr:stability} allows us to apply \cite[Thm. 1.10]{Mumford1994} to say that there exist quasiprojective geometric quotient varieties and a morphism between them
\[ Y_1 = \tilde Y/\PGL_n \overset{p_1}{\longrightarrow}  X_1 = \tilde X/\PGL_n. \]
Since $\PGL_n$ acts on $\Mat_{n \times n}$ by algebra automorphisms, $p_1: Y_1 \to X_1$ has the structure
of an algebra bundle.

Proposition \ref{pr:schemThFree} allows us to invoke \cite[Prop.~0.9]{Mumford1994} to say that the quotient maps $\tilde Y \to Y_1$ and $\tilde X \to X_1$ are principal bundle maps and \textit{a fortiori} flat. Therefore \cite[Prop.~17.7.7]{Grothendieck1967} implies that $Y_1, X_1$ are smooth over $\Spec k$.
\benw{End of edits.}

\begin{prp}\label{pr:Azumaya-quotient}
 The sheaf of sections of $p_1:Y_1\to X_1$ is an Azumaya algebra ${\sh A}_1$ of degree $n$ over $X_1$.
\end{prp}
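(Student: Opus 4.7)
\textit{Proof proposal.} The plan is to realize $\sh A_1$ as the twist of $\Mat_{2\times 2}$ by the principal $\PGL_2$-bundle $\tilde X\to X_1$, using the fact that $\PGL_2=\operatorname{Aut}(\Mat_{2\times 2})$ acts by $k$-algebra automorphisms.

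First, I would combine Propositions~\ref{pr:stability} and \ref{pr:schemThFree} with \cite[Ch.~0, Prop.~0.9]{Mumford1994} to conclude that $\tilde X\to X_1$ is a principal $\PGL_2$-bundle (and similarly $\tilde Y\to Y_1$). Because $\PGL_2$ is smooth over $k$, this principal bundle is \'etale-locally trivial on $X_1$: there exists an \'etale cover $\{V_\alpha\to X_1\}$ together with $\PGL_2$-equivariant isomorphisms $\tilde X\times_{X_1}V_\alpha\cong V_\alpha\times_k\PGL_2$.

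Second, I would unpack the definition of $\tilde Y$ to observe that $\tilde p:\tilde Y\to \tilde X$ is isomorphic, as a $\PGL_2$-equivariant morphism over $\tilde X$, to the second projection $\tilde X\times_k\Mat_{2\times 2}\to\tilde X$, where the $\PGL_2$-action on the $\Mat_{2\times 2}$ factor is by conjugation. In particular, the sheaf $\tilde{\sh A}$ of sections of $\tilde p$ is $\PGL_2$-equivariantly isomorphic to $\Mat_{2\times 2}(\sh O_{\tilde X})$ as a sheaf of $\sh O_{\tilde X}$-algebras, the action on the matrix algebra being through $k$-algebra automorphisms.

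Third, since the $\PGL_2$-action on $\tilde{\sh A}$ respects both the $\sh O_{\tilde X}$-module structure and the algebra multiplication, faithfully flat descent along $\tilde X\to X_1$ endows the sheaf $\sh A_1$ of sections of $p_1$ with the structure of a sheaf of $\sh O_{X_1}$-algebras. Pulling back along any trivialization $\tilde X\times_{X_1}V_\alpha\cong V_\alpha\times_k\PGL_2$ from the first step and descending, one obtains an isomorphism $\sh A_1|_{V_\alpha}\cong \Mat_{2\times 2}(\sh O_{V_\alpha})$ of sheaves of algebras. Thus $\sh A_1$ is \'etale-locally isomorphic to $\Mat_{2\times 2}(\sh O_{X_1})$, i.e., a degree-$2$ Azumaya algebra on $X_1$.

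The main obstacle in this proposal is really confined to the first step: verifying that GIT does deliver an actual $\PGL_2$-torsor (and not merely a geometric quotient), which is where the stability and the freeness/properness of the action both come in. Once the torsor structure is in hand, Steps 2 and 3 are routine descent along a smooth torsor for the inner form group $\PGL_2=\operatorname{Aut}(\Mat_{2\times 2})$.
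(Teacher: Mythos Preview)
Your proposal is correct and follows essentially the same approach as the paper: both arguments invoke \cite[Prop.~0.9]{Mumford1994} to conclude that $\tilde X\to X_1$ is a $\PGL_2$-torsor, then use that $\tilde Y=\tilde X\times\Mat_{2\times 2}$ with the conjugation action to identify the descended bundle as a degree-$2$ Azumaya algebra. The paper's version is slightly terser---it checks triviality after the single faithfully flat pullback $\tilde X\to X_1$ via the cartesian square $\tilde Y\cong \tilde X\times_{X_1}Y_1$, rather than passing to an \'etale trivialization---but the content is the same.
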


\begin{proof}
 It is enough to show that the pullback of $p_1:Y_1\to X_1$ along some faithfully
 flat morphism $X'\to X_1$ is isomorphic to the trivial algebra bundle $X'\times \Mat_{n\times n} \to X'$.
 We claim that the quotient morphism $\tilde X\to X_1$ fulfills this requirement. By
 \cite[Prop.~0.9]{Mumford1994}, 
 $\tilde X\to X_1$ and $\tilde Y\to Y_1$ are $\PGL_n$-torsors. This means that $\tilde X\to X_1$ is
 faithfully flat, and that the commutative square
 \begin{equation*}
      \begin{tikzcd}  \tilde Y \dar{\tilde p} \rar{} & Y_1 \dar{p_1} \\ \tilde X \rar{} & X_1
      \end{tikzcd}
    \end{equation*}
    is cartesian. Because $\tilde Y$ is $\tilde X\times \Mat_{n\times n}$, this proves the claim and the proposition.
\end{proof}

\uriyaf[inline]{Edited from here to follow referee's suggestion to shorten the proofs.
Also changed the order of the propositions a bit.}

\begin{proposition} \label{pr:C2actionOnYr}
  The $C_2$-action on $Y_1$ over $X_1$ induces the structure of a degree-$n$ Azumaya algebra with involution
  on the sheaf of sections $\sh A_1$.
\end{proposition}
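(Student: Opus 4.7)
The strategy is to descend the $C_2$-actions from $\tilde Y$ and $\tilde X$ to $Y_1$ and $X_1$, then verify that the resulting action on the bundle of algebras $Y_1 \to X_1$ fulfils the three bulleted conditions of Remark~\ref{rem:bundleTheory}. First, I would observe that the $\PGL_2$- and $C_2$-actions on $\tilde Y$ and $\tilde X$ fit together into actions of $\Gamma = \PGL_2 \rtimes C_2$, with respect to which $\tilde p : \tilde Y \to \tilde X$ is equivariant. Since $\PGL_2 \trianglelefteq \Gamma$ with quotient $C_2$, and $Y_1, X_1$ are geometric quotients by $\PGL_2$, the $C_2$-action descends to an action on $Y_1$ and on $X_1$, and $p_1$ becomes $C_2$-equivariant; the induced map on $X_1$ is the involution $\lambda$ of interest.

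The descended $C_2$-action on $Y_1$ gives a morphism $\tau : A_1 \to A_1$ of varieties making the square in Remark~\ref{rem:bundleTheory} commute, and $\tau^2 = \id$ follows at once from the fact that the $C_2$-action on $\tilde Y$ has order two. The remaining item—that $\tau$ reverses the algebra multiplication on $A_1$—is the only point with any content, and I would handle it by faithfully flat descent along the $\PGL_2$-torsor $\tilde X \to X_1$ provided by Proposition~\ref{pr:Azumaya-quotient}. After pullback, the bundle of algebras becomes the trivial bundle $\tilde X \times \Mat_{2\times 2} \to \tilde X$, and both the multiplication map and the descended $C_2$-action pull back to structures I can describe explicitly: multiplication is the fiberwise product in $\Mat_{2\times 2}$, and the pulled-back $C_2$-action is given by the swap of the two copies of $U$ on the base composed with $m \mapsto m^T$ on the fiber.

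Since transpose is an anti-automorphism of $\Mat_{2 \times 2}$, i.e.\ $(m_1m_2)^T = m_2^T m_1^T$, the pulled-back $\tau$ reverses multiplication on local sections of $\tilde X \times \Mat_{2 \times 2} \to \tilde X$. By faithfully flat descent, $\tau$ reverses multiplication on $\sh A_1$ as well. Equivalently, $\tau$ gives an algebra isomorphism $\sh A_1 \to \lambda_*\sh A_1^{\op}$ with $\lambda_*(\tau) \circ \tau = \id$, so it is a $\lambda$-involution in the sense of Section~\ref{sec:gen}. The main (mild) obstacle is just bookkeeping in the interplay between the $\PGL_2$-descent of the algebra structure and the $C_2$-descent of the involution, but because the two actions together come from a single $\Gamma$-action that is compatible with the algebra operations on the trivial bundle $\tilde Y \to \tilde X$, no difficulty arises.
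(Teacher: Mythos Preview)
Your proposal is correct and follows essentially the same approach as the paper: descend the $C_2$-action from $\tilde Y,\tilde X$ to $Y_1,X_1$ using the $\Gamma$-structure, and then verify the anti-multiplication property by passing back through the cartesian square of Proposition~\ref{pr:Azumaya-quotient}. The only cosmetic difference is that the paper checks the anti-automorphism condition fibre-by-fibre at closed $k$-points of $X_1$, whereas you phrase the same reduction as faithfully flat descent along the torsor $\tilde X \to X_1$; both amount to the observation that $m\mapsto m^T$ is an anti-automorphism of $\Mat_{2\times 2}$.
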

\begin{proof}
    We adopt the point of view of Remark \ref{rem:bundleTheory}.

    Write $\tilde \sigma: \tilde Y \to \tilde Y$ and $\tilde \lambda : \tilde X \to \tilde X$ for the morphisms
    induced by the nontrivial element of $C_2$. It is elementary to verify that the diagram 
    \begin{equation} \label{eq:commSqLift}
        \begin{tikzcd}
            \tilde Y \dar{\tilde p} \rar{\tilde \sigma} & \tilde Y \dar{\tilde p} \\ \tilde X \rar{\tilde \lambda} & \tilde X
        \end{tikzcd}
    \end{equation}
    commutes and that all morphisms descend to $\PGL_n$-quotients. We then arrive at a commutative square:
    \begin{equation*} \label{eq:commSqDown}
        \begin{tikzcd}
            Y_1 \dar{p_1} \rar{\sigma_1} & Y_1 \dar{p_1} \\  X_1 \rar{\lambda_1} & X_1.
        \end{tikzcd}
    \end{equation*}
    Since $\tilde \sigma^2 = \id_{\tilde Y}$, it follows directly that $\sigma_1^2 =\id_{Y_1}$.
   
   It remains to verify that $\sigma_1$ is a map of vector bundles and reverses the order of multiplication. It is sufficient to check that for every $x\in X_1(k)$, the map $\sigma_1$
   restricts to an   anti-automorphism  of $k$-algebras   $p_1^{-1}(x)\to p^{-1}(\lambda_1(x))$. 
   
   As in the proof of Proposition~\ref{pr:Azumaya-quotient}, the diagram
    \begin{equation*}
      \begin{tikzcd}  \tilde Y \dar{\tilde p} \rar{f} & Y_1 \dar{p_1} \\ \tilde X \rar{h} & X_1
      \end{tikzcd}
    \end{equation*}
    is cartesian, where the horizontal maps $f,h$ are the quotient maps. If $\tilde x \in \tilde X$ is a closed point and $x \in X_1$ is its image under $h$, then $f$ induces an isomorphism of $k$-algebras between fibres $\tilde p^{-1}(\tilde x)$ and $p_1^{-1}(x)$. 
 We know  that $\tilde{\sigma}$ induces an anti-automorphism of $k$-algebras
 $\tilde{p}^{-1}(\tilde x)\to \tilde{p}^{-1}(\tilde{\lambda}(\tilde{x}))$ for every
 $\tilde{x}\in \tilde{X}(k)$, so the respective property for $\sigma_1$, $\lambda_1$, $X_1$
 holds with $x=h(\tilde x)$.  
    This is what we want.
\end{proof}

Write $\lambda_1:X_1\to X_1$
and
$\sigma_1 : \sh A_1 \to \lambda_{1*} \sh A_1$ for the involutions promised
by Proposition~\ref{pr:C2actionOnYr}.
Denote by $Z_1$ the ramification locus of $\lambda_1$.

With the goal of applying Proposition~\ref{pr:criterion}(i) to a modification
of $(\sh A_1,\sigma_1)$, which will be defined later, we now compute $\Hoh^0(X_1,\sh O_{X_1}^\times)$ 
and $\Pic(X_1)$.

\begin{proposition} \label{pr:CohoX}
  The following hold:\uriyaf{Changed the proposition to address $X_1$ instead of $X$. I think this is clearer this way.}
  \begin{enumerate}[label=(\roman*)]
  \item\label{i:Hoh0} $\Hoh^0(X_1; \sh O_{X_1}^\times) = k^\times$;
  \item \label{i:Pic} $\Hoh^1(X_1; \sh O_{X_1}^\times)=\Pic(X_1) = 0$.
  \end{enumerate}
\end{proposition}

\begin{proof}\uriyaf{This proof was rewritten following the referee's suggestions.}
  \ref{i:Hoh0} Since the quotient map $ \tilde{X}\to X_1$ is dominant, and since both $\tilde{X}$
  and $X_1$ are integral, being smooth and connected, the induced map
  $\Hoh^0(X_1,\sh O_{X_1} )\to \Hoh^0(\tilde{X},\sh O_{\tilde{X}} )$ is injective
  \cite[\href{https://stacks.math.columbia.edu/tag/0CC1}{Tag 0CC1}]{SP23}. By taking units, we deduce that $h^*:\Hoh^0(X_1,\sh O_{X_1}^\times)\to \Hoh^0(\tilde{X},\sh O_{\tilde{X}}^\times)$ is also injective, so it is enough to show that $\Hoh^0(\tilde{X},\sh O_{\tilde{X}}^\times)=k^\times$.  By
  construction, $\tilde{X}$ is the complement in $\A^{2rn^2}$ of a closed subvariety of codimension
  $r-1\geq 2$, so by the algebraic Hartog Lemma
  (\cite[\href{https://stacks.math.columbia.edu/tag/031T}{Tag
    031T}]{SP23}\uriyaf{This is the most explicit reference I could find.})
  $\Hoh^0(\tilde{X},\sh O_{\tilde{X}}^\times)=\Hoh^0(\A^{2rn^2},\sh
  O_{\A^{2rn^2}}^\times)=k^\times$.
 
  \ref{i:Pic} Sansuc \cite[Prop.~6.10]{Sansuc1981} associated with the $\PGL_n$-torsor $\tilde{X}\to X_1$ an exact sequence
 \[
 0=(\PGL_n)^{\wedge}(k)\to \Pic(X_1)\to \Pic(\tilde{X}),
 \]
 where $(\PGL_n)^{\wedge}(k)$ is the group of characters of $\PGL_n$.
 It is therefore enough  to show that $\Pic(\tilde{X})=0$.
 Since $\tilde{X}$ is smooth, we may
 identify $\Pic(\tilde{X})$ with $\CH^1(\tilde{X})$,
 and this group is $0$ because $\tilde{X}$ is open in $  \A^{2rn^2} $,
 see \cite[pp.~21, 23]{Fulton1997}.
\end{proof}

We next check that the coarse type of $\sigma_1$ is not constant when $n$ is even.

\begin{proposition} \label{pr:C2Fixed}
 Suppose $n$ is even.
  There exists a closed point $o_1 \in Z_1(k)$ where $\sigma_1$ is orthogonal and a closed point $s_1 \in Z_1(k)$ where
  it is symplectic.
\end{proposition}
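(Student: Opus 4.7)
The plan is to describe $k$-points of $Z_1$ explicitly using the quotient structure, compute the induced involution on each fiber of $p_1$, and then exhibit explicit orthogonal and symplectic examples. Since $k$ is algebraically closed, $\PGL_2$-torsors over $\Spec k$ are trivial, so $X_1(k)$ identifies with the orbit set $\tilde X(k)/\PGL_2(k)$. A $k$-point $\tilde x = (a_\bullet, b_\bullet) \in \tilde X(k)$ maps into $Z_1(k)$ exactly when $\tilde\lambda(\tilde x) = (b_\bullet, a_\bullet)$ lies in the $\PGL_2$-orbit of $\tilde x$, that is, there is some $[h]\in \PGL_2(k)$ with $\tilde x \cdot [h] = \tilde\lambda(\tilde x)$. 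Unpacking the group action gives $b_i = h^{-1} a_i h$ and $a_i = h^T b_i h^{-T}$ for each $i$; combining these shows that $h^{-1} h^T$ commutes with every $a_i$, and since $a_\bullet \in U(k)$ the $a_i$ generate $\Mat_{2\times 2}(k)$ as a $k$-algebra, so $h^{-1} h^T$ must be a scalar. Taking transposes then forces $h^T = c h$ with $c \in \{\pm 1\}$.

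Next I would compute the induced involution on the fiber $p_1^{-1}(x) \cong \Mat_{2\times 2}(k)$, using the cartesian diagram from the proof of Proposition~\ref{pr:Azumaya-quotient} to identify this fiber with $\tilde p^{-1}(\tilde x)$. The lift $\tilde\sigma$ sends $(\tilde x, m)$ to $(\tilde\lambda(\tilde x), m^T) = (b_\bullet, a_\bullet, m^T)$; acting by $[h]^{-1}$ to return to the fiber over $\tilde x$ yields $(a_\bullet, b_\bullet, h m^T h^{-1})$, so the induced involution on the fiber is $m \mapsto h m^T h^{-1}$. Standard linear algebra identifies this as the adjoint of the nondegenerate bilinear form $B(v, w) = v^T h^{-1} w$ on $k^2$, which is symmetric---so $\sigma_1$ is orthogonal at $x$---when $h^T = h$ (case $c = 1$), and skew-symmetric---so $\sigma_1$ is symplectic at $x$---when $h^T = -h$ (case $c = -1$).

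Finally I would exhibit the two required points. Fix any $a_\bullet \in U(k)$. For $o_1$, take $o_1 = [(a_\bullet, a_\bullet)]$, realized with $h = I$ (symmetric); this gives an orthogonal involution on the fiber. For $s_1$, take $s_1 = [(a_\bullet, J^{-1} a_\bullet J)]$ where $J = \bigl(\begin{smallmatrix} 0 & 1 \\ -1 & 0\end{smallmatrix}\bigr)$, realized with $h = J$ (skew-symmetric); this gives a symplectic involution on the fiber, and the tuple $J^{-1} a_\bullet J$ lies in $U(k)$ because conjugation by $J$ is a $k$-algebra automorphism of $\Mat_{2\times 2}(k)$. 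The only real obstacle is the bookkeeping in step two---keeping the direction of the right $\PGL_2$-action straight when translating the image of $\tilde\sigma$ back to the fiber over $\tilde x$---after which the orthogonal/symplectic classification and the explicit examples follow immediately.
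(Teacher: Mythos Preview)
Your proof is correct and follows essentially the same approach as the paper: exhibit explicit $\PGL_2$-orbits fixed by $\tilde\lambda$, compute the induced involution on the fibre by lifting through $\tilde\sigma$ and translating back via the $\PGL_2$-action, and read off orthogonal versus symplectic from the symmetry of the matrix $h$. The paper is terser---it simply writes down $o_1=[a_\bullet;a_\bullet]$ and $s_1=[a_\bullet;-wa_\bullet w]$ with $w=\bigl(\begin{smallmatrix}0&-1\\1&0\end{smallmatrix}\bigr)$ and computes directly---whereas you first characterize all of $Z_1(k)$ by the dichotomy $h^T=\pm h$, which is a nice addition but not needed for the statement.
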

\begin{proof}
 Write $m=\frac{n}{2}$.
  Let $(a_1, \dots, a_r)$ be an $r$-tuple of matrices generating $\Mat_{n \times n}(k)$---since $r \ge 2$, such an
  $r$-tuple exists. Let $w =
  \begin{bmatrix}
    0 & - I_m \\ I_m & 0 
  \end{bmatrix}$, and note $w^2 = -I_n$. Consider the points \[ o_1 = [a_1, \dots, a_r; a_1, \dots, a_r] \text{\quad and \quad }s_1 = [a_1, \dots, a_r; -wa_1w, \dots
  , -wa_rw] \]  in $X_1$. If $a \in p_1^{-1}(o_1)$ is a $k$-point, then it is possible to write $a$ as the $\PGL_2(k)$-orbit of a $(2r+1)$-tuple $(a_1, \dots,
  a_r;$ $a_1, \dots, a_r; m)$, and the involution $\sigma_1$ acts on this by sending $m \mapsto m^T$. This is an orthogonal
  involution.

Now suppose $b \in p_1^{-1}(s_1)$ is a closed point. It is possible to write $b$ as the $\PGL_2(k)$-orbit of a $(2r+1)$-tuple $(a_1, \dots,
a_r; -wa_1w, \dots, -wa_rw; m)$, and the involution $\sigma_1$ acts on this by 
\[ \begin{split} (a_1, \dots, a_r; -wa_1w, \dots, -wa_rw; m) \mapsto  (-wa_1w, \dots, -wa_rw; a_1, \dots, a_r; m^T)  \\\sim (a_1, \dots,
  a_r; -wa_1w, \dots, -wa_rw; -wm^Tw). \end{split}\]
That is, $\sigma$ acts as $m \mapsto -wm^Tw$, which is a symplectic involution.
\end{proof}

\uriyaf{Added this paragraph. We can make it into a corollary if you find it helpful.}
At this point, by applying
Proposition~\ref{pr:criterion}\ref{i:prcriti} to $(\sh A_1,\sigma_1)$ over $(X_1,\lambda_1)$,
we can assert that 
$\sigma_1$ is extraordinary when $n$ is even.
In order to obtain an extraordinary involution over an affine base scheme,
we   pull $(\sh A_1,\sigma_1)$ back along a Jouanolou construction for $X_1$.

In more detail,
since $X_1$ is quasiprojective, it carries a very ample line bundle $\sh L$, and the line bundle
$\lambda_1^*(\sh L) \tensor \sh L$ on $X_1$ remains very ample and provides an equivariant embedding of $X_1$ in some
$C_2$-equivariant projective space. That is, $X_1$ is $C_2$-quasiprojective. By \cite[Prop.~2.20]{Hoyois2017a}, there
exists a $C_2$-equivariant Jouanolou construction, viz., a $C_2$-equivariant affine vector bundle torsor $j:X \to X_1$,
where $X = \Spec R$ is a smooth affine $k$-variety carrying an involution $\lambda$.

Let $\sh A = j^* \sh A_1$, and write $\sigma$ for the involution of $\sh A$ induced by $\sigma_1$.

\begin{theorem} \label{th:AisExtraordinary}
  Suppose $n$ is even. Then the $\lambda$-involution $\sigma$ on the algebra $\sh A$ over the smooth affine $k$-variety $X$ is extraordinary.
\end{theorem}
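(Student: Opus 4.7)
The plan is to apply Proposition~\ref{pr:criterion}\ref{i:prcriti}, which reduces the problem to verifying its two hypotheses on $X$ together with showing that the coarse type $c_\sigma$ is not constant on the ramification locus $Z$.

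First, I would dispose of the cohomological hypotheses: by Proposition~\ref{pr:CohoX}, we have $\Hoh^0(X;\sh O_X^\times)=k^\times$ and $\Pic(X)=0$, so both hypotheses of Proposition~\ref{pr:criterion} are met. The field $k$ has characteristic different from $2$ by standing assumption, so everything is set up to invoke that criterion.

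Next, I need to produce two points of $Z$ at which $\sigma$ takes different coarse types. Proposition~\ref{pr:C2Fixed} already supplies closed points $o_1,s_1\in Z_1(k)$ at which $\sigma_1$ is orthogonal and symplectic, respectively. The remaining task is to lift these points to $Z$. Since $j:X\to X_1$ is a $C_2$-equivariant affine vector bundle torsor, the fixed-point subscheme $Z$ of $\lambda$ on $X$ is the pullback $j^{-1}(Z_1)$ along $j$ (equalizers commute with pullbacks, as used in the proof of Proposition~\ref{pr:etaleLocalIsoIsFPQC}), and the induced map $Z\to Z_1$ is again an affine vector bundle torsor, hence surjective on closed points. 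Choose $k$-rational lifts $o,s\in Z(k)$ of $o_1,s_1$. Because $\sh A=j^*\sh A_1$ with involution pulled back from $\sigma_1$, the fibre $(\sh A,\sigma)_{\kappa(o)}$ is obtained from $(\sh A_1,\sigma_1)_{\kappa(o_1)}$ by the trivial base change $k\to k$, and similarly at $s$. Consequently $c_\sigma(o)=c_{\sigma_1}(o_1)=+1$ and $c_\sigma(s)=c_{\sigma_1}(s_1)=-1$, so $c_\sigma$ is not constant on $Z$.

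With both hypotheses of Proposition~\ref{pr:criterion}\ref{i:prcriti} verified and the coarse type shown to be non-constant, that proposition immediately yields that $\sigma$ is extraordinary, completing the proof. The main conceptual work has already been done in the earlier propositions; the step most likely to need care in the write-up is the compatibility of the coarse type under the pullback along the Jouanolou torsor $j$, but this is straightforward because the coarse type at a point depends only on the residue-field fibre, and $j$ induces isomorphisms on residue fields at $k$-rational lifts.
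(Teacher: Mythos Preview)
Your overall strategy matches the paper's: verify the hypotheses of Proposition~\ref{pr:criterion}\ref{i:prcriti} via Proposition~\ref{pr:CohoX}, then lift the points $o_1,s_1$ of Proposition~\ref{pr:C2Fixed} to points of $Z$ and deduce that $c_\sigma$ is nonconstant. The place where your argument breaks down is the lifting step.

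You assert that $Z=j^{-1}(Z_1)$ by ``equalizers commute with pullbacks,'' citing the argument in Proposition~\ref{pr:etaleLocalIsoIsFPQC}. That argument applies in a genuinely different situation: there the $C_2$-action on $Y_i\times_{X/C_2}X$ is literally the base-change of the action on $X$ along $Y_i\to X/C_2$, so the fixed locus base-changes as well. Here $j:X\to X_1$ is only $C_2$-\emph{equivariant}; the action on $X$ is not obtained from that on $X_1$ by pullback. Concretely, $j^{-1}(Z_1)$ is the locus where $j(\lambda(x))=j(x)$, i.e.\ the equalizer of $j$ and $j\circ\lambda$, not of $\id_X$ and $\lambda$. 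Over a point $o_1\in Z_1$ the group $C_2$ still acts on the fibre $j^{-1}(o_1)\cong\A^n$, and nothing you have said rules out that this action is nontrivial. Thus neither $Z=j^{-1}(Z_1)$ nor the consequence that $Z\to Z_1$ is an affine vector bundle torsor is justified.

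The paper closes exactly this gap: it observes that $C_2$ acts on the affine-space fibre $j^{-1}(o_1)$ and appeals to a fixed-point result of Serre (\cite[Thm.~1.2]{Serre_2009_finite_fields_for_inf_fields}) to produce a $k$-point $o\in j^{-1}(o_1)$ fixed by $C_2$, hence lying in $Z$; similarly for $s_1$. Once such lifts exist, your final paragraph (identifying the fibre of $(\sh A,\sigma)$ at $o$ with that of $(\sh A_1,\sigma_1)$ at $o_1$) is correct and matches the paper. So your write-up needs only to replace the incorrect ``$Z=j^{-1}(Z_1)$'' step with an argument guaranteeing a $C_2$-fixed $k$-point in each fibre over $Z_1$.
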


\begin{proof}\uriyaf{Rewrote some of the proof according to the referee's suggestions.}
 By Proposition~\ref{pr:criterion}\ref{i:prcriti}, it is enough to check
 that $\Hoh^0(X,\sh O_X^\times)=k^\times$, $\Pic(X)=0$ and   the coarse type of $\sigma$
 is not constant.
 
 The first two statements were checked for $X_1$ in Proposition~\ref{pr:CohoX},
 so they will follow if we show that   $\Hoh^i(X_1,\sh O_{X_1}^\times)\to \Hoh^i(X,\sh O_X^\times)$,
 induced by $j:X\to X_1$,
 is  isomorphism for $i=0,1$.
 Since $j : X_1 \to X$ is an affine vector bundle torsor, it is Zariski locally on $X$ of the form $U \times \A^m \to U$. With this observation, it is elementary that $j^*$ induces an isomorphism
 \[ \Hoh^0(X_1; \sh O^\times_{X_1}) \to \Hoh^0(X; \sh O^\times_X). \]
To see that $j^*$ induces an isomorphism on Picard groups, use the identification of $\Pic(X)$ with $\CH^1(X)$ and \cite[Remarques p.35]{Grothendieck1958a} (see also the material of \cite[\S\S~2.1, 3.3]{Fulton1984}).

   It remains to show that the coarse type of $\sigma$ is not constant. 
   Let\uriyaf{
  The following technicality may be avoided if we would know that $j^{-1}(o)$ is fixed under $C_2$.
  I did not check in  \cite[Prop.~2.20]{Hoyois2017a} to see if that is the case, but if it is, then
  the following can be omitted.
 } $o_1,s_1\in X_1(k)$ be as in Proposition~\ref{pr:C2Fixed}. Then $C_2$ acts on the fibre $j^{-1}(o_1)\cong \A^m$.
 As observed by
 \cite[Thm.~1.2]{Serre_2009_finite_fields_for_inf_fields}, there is a $k$-point $o\in j^{-1}(o_1)$ that is fixed
 by $C_2$. Since $\lambda$ is a $k$-morphism and $\kappa(o)=k$, the point $o$ lies
 in the ramification locus $Z$ of $\lambda$. Since $j(o)=o_1$,
 we must have $(\sh A_{\kappa(o)},\sigma_{\kappa(o)})\iso
 (\sh A_{\kappa(o_1)},\sigma_{\kappa(o_1)})$, so the involution $\sigma$ is orthogonal at $o$.
 Similarly, there is a $k$-point $s\in j^{-1}(s_1)\cap Z$ at which $\sigma$ is symplectic. We conclude
 that $c_\sigma$ is not constant. 
\end{proof}

\section{Not all coarse types are realizable}
\label{sec:not-all-coarse}

Again, we work over an algebraically closed field $k$ of characteristic different from $2$.
Let $n$ be a positive integer, and let
\[ R_n = \frac{ k[x_0, \dots, x_n] }{ \left( 1 - \sum_{i=0}^n x_i^2 \right) }. \] 
We endow $R_n$ with a  $k$-involution $\lambda$
determined by 
\[ \lambda(x_i) = 
\left\{\begin{array}{ll} x_0 & i=0 
\\
-x_i & i\in\{1,\dots,n\}
\end{array}\right.\]
and write $X_n = \Spec(R_n)$.

The $\lambda$-fixed subvariety of $X_n$ is the spectrum of the quotient ring of $R_n$ by the
ideal generated by elements $r - \lambda(r)$. This ideal includes $x_1, \dots, x_n$, so that we quickly arrive at
\[ Z_n= \Spec \left(\frac{ k[x_0, \dots, x_n] }{ \left(x_1, x_2, \dots, x_n, 1 - \sum_{i=0}^n x_i^2, \right) } \right)\]
which consists of two points corresponding to $x_0 = \pm 1$ and $x_1=x_2= \dots = x_n = 0$.

The next propositions show that $X_n$ satisfies the assumptions of Proposition~\ref{pr:criterion}\ref{i:pcritii}.
\begin{proposition}\label{pr:XnTrivPic}
 If $n\geq 3$, 
 then $\Hoh^0(X_n,{\sh O}_{X_n}^\times)=k^\times$
 and $\Pic X_n=0$.
\end{proposition}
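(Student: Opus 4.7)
The plan is to realize $X_n$ as an affine open in a smooth projective quadric and apply the excision sequence for Picard groups. Homogenizing the defining equation of $R_n$, let $Q_n \subset \PP^{n+1}$ be the smooth projective quadric cut out by $T^2 - \sum_{i=0}^n X_i^2 = 0$ in homogeneous coordinates $[T : X_0 : \cdots : X_n]$. Setting $T = 1$ identifies $X_n$ with the affine open $\{T \neq 0\} \subset Q_n$, and the complementary closed subscheme
\[ D := Q_n \cap \{T = 0\} = \{\, \textstyle\sum_{i=0}^n X_i^2 = 0 \,\} \subset \PP^n \]
is a smooth projective quadric of dimension $n - 1$. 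For $n \geq 3$, we have $\dim D \geq 2$, so $D$ is in particular irreducible.

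The key input is the classical computation that for a smooth projective quadric $Q_n$ of dimension $n \geq 3$ over an algebraically closed field, $\Pic(Q_n)$ is freely generated by the hyperplane class $H$, and $H = [D]$ by definition. Granting this, the excision (localization) sequence for Weil divisor classes on the smooth variety $Q_n$ reads
\[ \Z\cdot [D] \longrightarrow \Pic(Q_n) \longrightarrow \Pic(X_n) \longrightarrow 0, \]
and since the first map carries the generator to the generator, it is surjective. This gives $\Pic(X_n) = 0$.

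For the units, I would take $f \in \Hoh^0(X_n, \sh O_{X_n}^\times)$ and view it as a rational function on $Q_n$ whose divisor is supported on $D$. By irreducibility of $D$ we may write $\div(f) = m[D]$ for some $m \in \Z$. But $\div(f) = 0$ in $\Pic(Q_n) \cong \Z$ and $[D] = H$ is a free generator, so $m = 0$. Hence $f$ extends to a global regular function on $Q_n$ with no zeros, i.e.\ $f \in \Hoh^0(Q_n, \sh O_{Q_n}^\times)$; as $Q_n$ is projective, connected and reduced over the algebraically closed field $k$, this group equals $k^\times$.

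The main obstacle is justifying the Picard group computation $\Pic(Q_n) = \Z \cdot H$ for $n \geq 3$. This is classical and can be obtained, e.g., from the Bruhat-style cellular decomposition of $Q_n$ (which makes the Chow groups free on the Schubert cells, with a unique divisorial cell when $n \geq 3$), or by an inductive application of the Lefschetz hyperplane theorem, and is the only nontrivial external fact the proof relies on; the remaining steps are routine.
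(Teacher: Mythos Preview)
Your argument is correct and is a genuinely different route from the one in the paper. The paper identifies $X_n$ with the affine quadric $Q_n$ of \cite{Asok2017}, which is $\A^1$-homotopy equivalent to a motivic sphere, and then computes $\Pic$ and the units as the motivic cohomology groups $\Hoh^2_{\mathrm{Mot.}}(X_n;\Z(1))$ and $\Hoh^1_{\mathrm{Mot.}}(X_n;\Z(1))$ via the $\PP^1$-suspension isomorphism. Your approach is entirely classical: compactify $X_n$ to a smooth projective quadric $Q_n\subset\PP^{n+1}$, use the excision sequence $\Z\cdot[D]\to\Pic(Q_n)\to\Pic(X_n)\to 0$, and invoke the standard fact $\Pic(Q_n)=\Z\cdot H$ for $\dim Q_n\ge 3$; the unit computation then follows from irreducibility of $D$ and the fact that a rational function with trivial divisor on a normal proper variety is a constant unit. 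Your proof has the advantage of being elementary and avoiding $\A^1$-homotopy theory altogether, at the cost of importing the classical description of $\Pic$ of a high-dimensional quadric (which, as you note, is readily available from the cellular decomposition or Grothendieck--Lefschetz). One small remark: the irreducibility of $D$ already holds for $\dim D\ge 1$, i.e.\ $n\ge 2$; the genuine reason $n\ge 3$ is needed is precisely that $\Pic(Q_2)\cong\Z^2$ (since $Q_2\cong\PP^1\times\PP^1$), so the excision map is no longer surjective onto $\Pic(Q_n)$ in that case.
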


\begin{proof}
 It
 is well known that $X_n$ has the homotopy type of a motivic sphere. In the notation of \cite{Asok2017},
  $X_n$ is isomorphic to $Q_n$ and\uriyaf{Should $s$ on the first case be $2$? I did not look up the reference.}\benw{Good catch! The $2$ on the second case should be $s$. It stands for `simplicial' in distinction from the Tate circle, $\Gm$.}
  \[ Q_n \weq
    \begin{cases}
      S^{m-1}_s \wedge \Gm^m \quad \text{ if $n=2m-1$ and } \\
      S^m_s \wedge \Gm^m \quad \text{ if $n=2m$}.
    \end{cases}
  \]
  In particular, if $n \ge 3$, then using \cite[Lecture
  4]{Mazza2006} and the $\PP^1$-suspension isomorphism for motivic cohomology gives 
  \[ 0  =  \Hoh_{\mathrm{Mot.}}^2(X_n; \Z(1)) = \Pic(X_n).\]
Similarly
  \[ k^\times  =  \Hoh^1(\Spec k; \Z(1)) \isomto  \Hoh_{\mathrm{Mot.}}^1(X_n; \Z(1)) = \Hoh^0(X_n ; \sh
    O_{X_n}^\times). \qedhere \]
\end{proof}

\begin{proposition}\label{pr:BrXn-is-p-torsion}
 Let $X$ be a smooth projective quadric in $\PP^m $ ($m\geq 3$)
 and let $Y$ be a smooth hyperplane section of $X$. If $\operatorname{char} k=0$, then $\Br(X\sm Y)=0$, and if $\operatorname{char} k=p>2$, then every element of $\Br(X\sm Y)$ is annihilated by some power of $p$.
\end{proposition}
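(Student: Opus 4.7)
The plan is to reduce the computation of $\Br(U)$ for $U = X \setminus Y$ to a computation of \'etale cohomology via the Kummer sequence, and then to compute the relevant cohomology using the Gysin sequence for the smooth pair $(X,Y)$, exploiting the cellular structure of smooth projective quadrics.

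Write $p=\operatorname{char}k$. It suffices to show that $\Br(U)[\ell^r] = 0$ for every prime $\ell \neq p$ and every $r \geq 1$; this covers both cases of the statement. Since $k$ is algebraically closed, $\Hoh^0(U;\Gm) = k^\times$ is $\ell^r$-divisible, so the Kummer sequence yields a short exact sequence
\[
0 \to \Pic(U)/\ell^r \to \Hoh^2(U;\mu_{\ell^r}) \to \Br(U)[\ell^r] \to 0,
\]
and the task reduces to showing the first map is surjective.

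Next, I would apply the Gysin sequence for the smooth divisor $Y \subset X$, which contributes an exact piece
\[
\Hoh^0(Y;\Z/\ell^r) \xrightarrow{g} \Hoh^2(X;\mu_{\ell^r}) \to \Hoh^2(U;\mu_{\ell^r}) \to \Hoh^1(Y;\Z/\ell^r),
\]
where $g(1) = c_1(\sh O_X(Y)) = c_1(\sh O_X(1))$ is the cycle class of the hyperplane section. Both $X$ and $Y$ are smooth projective quadrics over an algebraically closed field, hence cellular, and so their $\ell$-adic \'etale cohomology is concentrated in even degrees with the familiar Betti numbers of complex quadrics. In particular $\Hoh^1(Y;\Z/\ell^r) = 0$, and $\Hoh^0(Y;\Z/\ell^r) = \Z/\ell^r$ since $Y$ is connected.

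If $m \geq 4$, then $\dim X \geq 3$ and $\Hoh^2(X;\mu_{\ell^r}) \cong \Z/\ell^r$ is generated by $c_1(\sh O_X(1))$; hence $g$ is surjective, $\Hoh^2(U;\mu_{\ell^r}) = 0$, and $\Br(U)[\ell^r]=0$ follows immediately. If $m = 3$, then $X \cong \mathbb{P}^1 \times \mathbb{P}^1$ is the quadric surface and $\Hoh^2(X;\mu_{\ell^r}) = (\Z/\ell^r)^2$, generated by the two rulings $H_1,H_2$, with $[Y] = H_1 + H_2$; thus $\Hoh^2(U;\mu_{\ell^r}) \cong \Z/\ell^r$. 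I would then use excision for Picard groups to obtain $\Pic(U) \cong \Pic(X)/\Z\cdot[Y] \cong \Z$, generated by the restriction of $H_1$, and note that the Kummer map commutes with restriction along $U \hookrightarrow X$, so this generator maps to a generator of $\Hoh^2(U;\mu_{\ell^r})$, yielding the desired surjectivity.

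The main subtlety is the case $m = 3$, where $\Hoh^2(U;\mu_{\ell^r})$ does not vanish and one must verify that the Kummer map from $\Pic(U)$ hits a generator; for $m \geq 4$ the vanishing of $\Hoh^2(U;\mu_{\ell^r})$ makes this step automatic. All remaining ingredients---Kummer, Gysin, and the cohomology of smooth quadrics via cellular decomposition---are standard.
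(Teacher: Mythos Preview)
Your argument is correct but follows a different route from the paper's. The paper invokes Grothendieck's purity sequence
\[
0 \to \Br(X)\{\ell\} \to \Br(X\setminus Y)\{\ell\} \to \Hoh^1(Y,\Q_\ell/\Z_\ell)
\]
and kills the two outer terms separately: $\Br(X)=0$ because the unramified Brauer group is a birational invariant and $X$ is rational, while $\Hoh^1(Y,\Q_\ell/\Z_\ell)=0$ follows from the Kummer sequence on $Y$ together with the fact that $\Pic(Y)$ is torsion-free. Your approach instead runs the Kummer sequence directly on $U=X\setminus Y$ and computes $\Hoh^2(U;\mu_{\ell^r})$ via the Gysin sequence for the smooth pair $(X,Y)$, exploiting the cellular structure of quadrics. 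The paper's argument is uniform in $m$ and avoids the case split you need at $m=3$; on the other hand, your method uses only the Gysin and Kummer sequences and the explicit cohomology of quadrics, sidestepping the purity theorem for Brauer groups and the birational machinery. One small remark: your assertion that $\Hoh^0(U;\Gm)=k^\times$ is not actually needed---the short exact sequence $0 \to \Pic(U)/\ell^r \to \Hoh^2(U;\mu_{\ell^r}) \to \Hoh^2(U;\Gm)[\ell^r] \to 0$ drops out of the long exact Kummer sequence regardless, and $\Br(U)[\ell^r]\subseteq \Hoh^2(U;\Gm)[\ell^r]$ since $U$ is smooth and quasi-projective.
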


\begin{proof}
 The schemes $W$ we consider in this proof are quasiprojective and smooth over $k$,  so
 their Brauer groups $\Br(W)$ are naturally isomorphic to the cohomological Brauer group $\Hoh^2(W,\Gm)$,
 see \cite[Thms.~3.3.2, 3.5.1]{Colliot-Thelene2021}. We do not distinguish between these groups.
 
 Let $\ell$ be a prime number different from $\operatorname{char} k$,  and  let $\Br(X\sm Y)\{\ell\}$ denote the subgroup of elements whose orders are a power of $\ell$. It suffices to show that $\Br(X\sm Y)\{\ell\} = 0$.
 
 By a theorem
 of Grothendieck \cite[Cor.~6.2]{Grothendieck1968b}
 (or \cite[Thms.~3.7.1]{Colliot-Thelene2021}),
 there is an exact
 sequence
 \[
 0\to
 \Br(X)\{\ell\}
 \to
 \Br(X\sm Y)\{\ell\}
 \to
 \Hoh^1(Y,\Q_\ell /\Z_\ell).
 \]
    Note that $Y$ is irreducible because $m>2$.
 Thus, it is enough to show that $\Br(X)\{\ell\}=0$
 and $\Hoh^1(Y,\Q_\ell /\Z_\ell)=0$. 
 
 Since $k$ is algebraically closed, $X$ is  
 birationally equivalent to $\mathbb{P}^{m-1}$.\uriyaf{Fixed $n$ to $m$.} 
 As $X$ is also proper over $k$, the group
 $\Br(X)$ is the unramified cohomological Brauer group
 $\Br_{\mathrm{nr}}(k(X)/k)$, and similarly for $\mathbb{P}^{m-1}$
 \cite[Prop.~5.2.2]{Colliot-Thelene2021}.
 Since $\Br_{\mathrm{nr}}(k(\mathbb{P}^{m-1})/k)=\Br(\mathbb{P}^{m-1})=0$
 \cite[Cor. 5.1.4]{Colliot-Thelene2021},
 it follows that $\Br(X)=0$.
 
 It is well known that $\Pic(Y)$ is torsion-free, see
 \cite[Exercise~6.5(c)]{Hartshorne1977},
 and $\Hoh^0(Y, \sh O^\times_Y)=k^\times$ because $Y$ is proper.
 This and the cohomology exact sequence associated to the Kummer sequence
 $\mu_{\ell^n}\to \Gm\xrightarrow{\ell^n} \Gm$ over $Y$ together imply that $\Hoh^1(Y,\mu_{\ell^n})=0$ for all $m$,
 and hence $\Hoh^1(Y,\Q_\ell /\Z_\ell)=\colim \Hoh^1(Y,\mu_{\ell^n})=0$, where the passage to the colimit is allowed by \cite[\href{https://stacks.math.columbia.edu/tag/09YQ}{Tag 09YQ}]{SP23}. 
\end{proof}

Suppose that $n\geq 3$. Propositions~\ref{pr:XnTrivPic}
and~\ref{pr:BrXn-is-p-torsion} imply that $\Hoh^0(X_n,{\sh O}_{X_n}^\times)=k^\times$, that
$\Pic(X_n)=0$ and that \uriyaf{Slightly rephrased}the $2$-torsion of $\Br(X_n)$ is trivial.
By Proposition~\ref{pr:criterion}\ref{i:pcritii}, this means that any non-constant coarse type $c\in\Hoh^0(Z_n,\mu_2)$ is not realizable.
Since $Z_n$ consists of two points, it follows that some locally constant function $c:Z_n \to \{+1, -1\}$ 
does not arise  as the coarse type of
a $\lambda$-involution. We have proved the following theorem, which solves \cite[Problem 6.24]{First2020}. 

\begin{thm}\label{thm:unrealizableCoarseType}
 For every $n\geq 3$,   the non-constant coarse types
 in $\Hoh^0(Z_n,\mu_2)\cong \mu_2\times \mu_2$ are not realizable as the coarse type
 of a $\lambda$-involution on an Azumaya algebra over $X_n$.
\end{thm}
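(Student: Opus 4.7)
The plan is to apply Proposition~\ref{pr:criterion}\ref{i:pcritii} to the scheme $X_n$, which will immediately force every coarse type to be constant; since $Z_n$ has exactly two connected components, the two non-constant functions $Z_n \to \{\pm 1\}$ will then fail to be realizable.

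To apply Proposition~\ref{pr:criterion}\ref{i:pcritii}, I need to check three hypotheses on $X_n$: that $\Hoh^0(X_n,{\sh O}_{X_n}^\times) = k^\times$, that $\Pic(X_n) = 0$, and that $\Br(X_n)$ has no nontrivial $2$-torsion. The first two conditions are precisely what Proposition~\ref{pr:XnTrivPic} gives us, using the identification of $X_n$ with a motivic sphere and a short motivic cohomology computation.

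For the Brauer group condition, I would exhibit $X_n$ as the complement of a smooth hyperplane section in a smooth projective quadric, so as to invoke Proposition~\ref{pr:BrXn-is-p-torsion}. Concretely, the projective closure of $X_n$ in $\mathbb{P}^{n+1}$, with an extra homogenizing coordinate $t$, is the smooth quadric $\{t^2 = \sum_{i=0}^n x_i^2\}$ (smoothness uses $\operatorname{char} k \neq 2$), and $X_n$ is its complement in the hyperplane section $\{t = 0\}$, which is the smooth quadric $\{\sum x_i^2 = 0\}$ in $\mathbb{P}^n$ (smooth and irreducible because $n \geq 3$). Proposition~\ref{pr:BrXn-is-p-torsion} then tells us that $\Br(X_n)$ is trivial in characteristic $0$ and only has $p$-torsion in characteristic $p > 2$; in particular, its $2$-torsion is trivial.

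With all three hypotheses verified, Proposition~\ref{pr:criterion}\ref{i:pcritii} shows that any $\lambda$-involution $\sigma$ on an Azumaya algebra over $X_n$ has constant coarse type $c_\sigma \in \Hoh^0(Z_n,\mu_2)$. Since $Z_n$ consists of the two closed points $x_0 = \pm 1$ (all other $x_i$ vanishing), the group $\Hoh^0(Z_n,\mu_2) \cong \mu_2 \times \mu_2$ contains two non-constant elements, neither of which is realizable. The whole argument is essentially a bookkeeping exercise atop Proposition~\ref{pr:criterion}; there is no genuine obstacle here, since the serious work has already been done in Propositions~\ref{pr:XnTrivPic} and~\ref{pr:BrXn-is-p-torsion}.
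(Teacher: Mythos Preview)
Your proposal is correct and follows essentially the same route as the paper: verify the three hypotheses of Proposition~\ref{pr:criterion}\ref{i:pcritii} via Propositions~\ref{pr:XnTrivPic} and~\ref{pr:BrXn-is-p-torsion}, then conclude. You even spell out the projective closure of $X_n$ and the hyperplane section at infinity, which the paper leaves implicit; the only slip is the phrase ``$X_n$ is its complement in the hyperplane section $\{t=0\}$'', where you mean $X_n$ is the complement \emph{of} that hyperplane section in the projective quadric.
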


\begin{example}\label{ex:semiNotOrd}
  In the exceptional case of $n=2$, the ring $R_2$ supports an explicit example of a semiordinary $\lambda$-involution that is not
  ordinary. Such involutions were shown to exist in \cite[Example 7.7]{First2020}, but no explicit example was given. 

 Let $i \in k$ be a square-root of $-1$.
  We remark that $X_2=\Spec R_2$ is a Jouanolou construction over $\PP^1$
  (e.g.\ via $(x_0,x_1,x_2)\mapsto (1+x_0:x_1+ix_2):X_2\to\PP^1$),
  and in particular we have \[\Hoh^0(X_2; \sh
  O^\times_{X_2}) = \Hoh^0(\PP^1; \sh
  O^\times_{\PP^1 }) = k^\times.\]

Now consider the $2 \times 2$ idempotent matrix
  \[ E =
    \frac{1}{2}\begin{bmatrix}
      1-x_0 & x_1 + ix_2 \\ x_1 - ix_2 & 1 + x_0 
    \end{bmatrix}, \]
  which has determinant $0$ and trace $1$, and therefore has rank $1$ at all primes of $R_2$. The kernel
  of $E: R_2^2 \to R_2^2$ is a projective module $M$ of rank $1$.

  There exists an additive function $R_2^2 \to R_2^2$ given by
  \[
    \begin{bmatrix}
       r \\ s 
    \end{bmatrix} \mapsto
    \begin{bmatrix}
      \lambda(r) \\ -\lambda(s)
    \end{bmatrix}
  \]
  and this restricts to give a function $\epsilon : M \to M$ that is $\lambda$-linear in that
  \[ \epsilon(rm) = \lambda(r) \epsilon(m), \quad \forall\, r \in R_2, \, m \in M. \]
  Let $M^*=\Hom_{R_2}(M,R_2)$ be the   dual of $M$. There is a function $\epsilon^*: M^* \to M^*$ defined by
  \[ \epsilon^*(\mu) (m) = \lambda(\mu(\epsilon(m))), \quad \forall\, \mu \in M^*, \, m \in M.\]
  It is additive and $\lambda$-linear.

 Recall that the fixed locus of $\lambda$ on $X_2$
 consists of two $k$-points  corresponding to $(x_0, x_1, x_2) = (\pm 1, 0 , 0)$. At $p=(1,0,0)$,  
  the $k$-linear self-map $\epsilon(p)$ from
  $M(p):=  \kappa(p) \tensor_{R_2} M$ to itself is  the
  identity (because $E(p)=[\begin{smallmatrix} 0 & 0 \\ 0 & 1 \end{smallmatrix}]$), 
  whereas at $q=(-1,0,0)$, the map 
  $\epsilon(q) : M(q)\to M(q)$
 is multiplication by
  $-1$ (because $E(q)=[\begin{smallmatrix} 1 & 0 \\ 0 & 0 \end{smallmatrix}]$). A similar statement applies to $\epsilon^* : M^* \to M^*$: it too is the identity at $p$ and multiplication by
  $-1$ at $q$.

  Now define the $R_2$-algebra $A = \End_{R_2}( R_2 \oplus M)$. Elements of this algebra may be written as matrices
  \[
    \begin{bmatrix}
      r_1 & \mu \\ m & r_2 
    \end{bmatrix}, \qquad r_1, r_2 \in R_2, \quad m \in M, \quad \mu \in M^*,
  \]
  and the algebra structure is given by the usual matrix multiplication
  \[
      \begin{bmatrix}
      r_1 & \mu \\ m & r_2 
    \end{bmatrix}  \begin{bmatrix}
      s_1 & \nu \\ n & s_2 
    \end{bmatrix} =
    \begin{bmatrix}
      r_1s_1 + \mu(n) & r_1 \nu + s_2 \mu \\ s_1 m+r_2 n & \nu(m) + r_2s_2
    \end{bmatrix}.
  \]
  By construction, $A$ is a Brauer-trivial Azumaya algebra over $R_2$.

  We endow $A$ with a $\lambda$-involution
  \[
    \sigma \left( \begin{bmatrix}
        r_1 & \mu \\ m & r_2 
      \end{bmatrix}  \right) =
    \begin{bmatrix}
      \lambda(r_2) & \epsilon^*(\mu) \\ \epsilon(m) & \lambda(r_1)
    \end{bmatrix}.
  \]
  We calculate the coarse type of $\sigma$ by evaluating at the two fixed points $p,q$ of $X_2$. After
  base-change along $R_2 \to \kappa(p)$, the functions
  $\epsilon$ and $\epsilon^*$ become identity maps and the involution $\sigma$
  becomes
  \[ 
      \begin{bmatrix}
        r_1 & \mu \\ m & r_2
      \end{bmatrix} \mapsto
    \begin{bmatrix}
      r_2 & \mu \\ m & r_1 
    \end{bmatrix}
  \]
  which is orthogonal.
  By contrast, at $q$, the functions $\epsilon$ and $\epsilon^*$ become multiplication by $-1$, and the involution
  becomes
  \[
 \begin{bmatrix}
        r_1 & \mu \\ m & r_2
      \end{bmatrix} \mapsto
    \begin{bmatrix}
      r_2 & -\mu \\ -m & r_1 
    \end{bmatrix}
  \]
which is symplectic. Therefore the coarse type of $\sigma$ is nonconstant.

Since $\Hoh^0(X_2,{\sh O}^\times_{X_2})=k^\times$ and $c_\sigma$ is not constant, 
Proposition \ref{pr:ordinaryExtends} applied to $X_2$ implies that $\sigma$ is not ordinary. On the other hand, $A$ is Brauer-trivial, so $\sigma$ must be semiordinary.
\end{example}

\printbibliography

\end{document}

